\date{\today}
\newtheorem{theorem}{Theorem}
\newtheorem{proposition}{Proposition}
\newtheorem{lemma}{Lemma}
\theoremstyle{definition}
\newtheorem{example}{Example}
\newtheorem{remark}{Remark}
\begin{document}

\title[On the semigroup of injective  endomorphisms of the semigroup $\boldsymbol{B}_{\omega}^{\mathscr{F}^3}$]{On the semigroup of injective monoid endomorphisms of the monoid $\boldsymbol{B}_{\omega}^{\mathscr{F}^3}$ with a three element family $\mathscr{F}^3$ of inductive nonempty subsets of $\omega$}
\author{Oleg Gutik and Marko Serivka}
\address{Ivan Franko National University of Lviv, Universytetska 1, Lviv, 79000, Ukraine}
\email{oleg.gutik@lnu.edu.ua, marko.serivka@lnu.edu.ua}

\keywords{Bicyclic monoid, inverse semigroup, bicyclic extension, endomorphism, semigroup of endomorphisms, multiplicative semigroup of positive integers.}

\subjclass[2020]{20M18, 20F29, 20M10.}

\begin{abstract}
We describe injective monoid endomorphisms of the semigroup $\boldsymbol{B}_{\omega}^{\mathscr{F}^3}$ with a three element family $\mathscr{F}^3$ of inductive nonempty subsets of $\omega$. Also, we show that the monoid $\boldsymbol{End}_*^1(\boldsymbol{B}_{\omega}^{\mathscr{F}})$ of all injective endomorphisms of the semigroup $\boldsymbol{B}_{\omega}^{\mathscr{F}^3}$ is isomorphic to the multiplicative semigroup of positive integers.
\end{abstract}

\maketitle


\section{\textbf{Introduction, motivation and main definitions}}

We shall follow the terminology of~\cite{Clifford-Preston-1961, Clifford-Preston-1967, Lawson=1998}. By $\omega$ we denote the set of all non-negative integers and by $\mathbb{N}$ the set of all positive integers.

\smallskip

Let $\mathscr{P}(\omega)$ be  the family of all subsets of $\omega$. For any $F\in\mathscr{P}(\omega)$ and any integer $n$ we put $n+F=\{n+k\colon k\in F\}$ if $F\neq\varnothing$ and $n+\varnothing=\varnothing$.
A subfamily $\mathscr{F}\subseteq\mathscr{P}(\omega)$ is called \emph{${\omega}$-closed} if $F_1\cap(-n+F_2)\in\mathscr{F}$ for all $n\in\omega$ and $F_1,F_2\in\mathscr{F}$. For any $a\in\omega$ we denote $[a)=\{x\in\omega\colon x\geqslant a\}$.

\smallskip

A subset $A$ of $\omega$ is said to be \emph{inductive}, if $i\in A$ implies $i+1\in A$. Obvious, that $\varnothing$ is an inductive subset of $\omega$.

\begin{remark}[\cite{Gutik-Mykhalenych=2021}]\label{remark-1.1}
\begin{enumerate}
  \item\label{remark-1.1(1)} By Lemma~6 from \cite{Gutik-Mykhalenych=2020} nonempty subset $F\subseteq \omega$ is inductive in $\omega$ if and only $(-1+F)\cap F=F$.
  \item\label{remark-1.1(2)} Since the set $\omega$ with the usual order is well-ordered, for any nonempty inductive subset $F$ in $\omega$ there exists nonnegative integer $n_F\in\omega$ such that $[n_F)=F$.
  \item\label{remark-1.1(3)} Statement \eqref{remark-1.1(2)} implies that the intersection of an arbitrary finite family of nonempty inductive subsets in $\omega$ is a nonempty inductive subset of  $\omega$.
\end{enumerate}
\end{remark}

For an arbitrary semigroup $S$ any homomorphism $\alpha\colon S\to S$ is called an \emph{endomorphism} of $S$. If the semigroup has the identity element $1_S$ then the endomorphism $\alpha$ of $S$ such that $(1_S)\alpha=1_S$ is said to be a \emph{monoid endomorphism} of $S$. A bijective endomorphism of $S$ is called an \emph{automorphism}.

\smallskip

A semigroup $S$ is called {\it inverse} if for any
element $x\in S$ there exists a unique $x^{-1}\in S$ such that
$xx^{-1}x=x$ and $x^{-1}xx^{-1}=x^{-1}$. The element $x^{-1}$ is
called the {\it inverse of} $x\in S$. If $S$ is an inverse
semigroup, then the function $\operatorname{inv}\colon S\to S$
which assigns to every element $x$ of $S$ its inverse element
$x^{-1}$ is called the {\it inversion}.

\smallskip


If $S$ is a semigroup, then we shall denote the subset of all
idempotents in $S$ by $E(S)$. If $S$ is an inverse semigroup, then
$E(S)$ is closed under multiplication and we shall refer to $E(S)$ as a
\emph{band} (or the \emph{band of} $S$). Then the semigroup
operation on $S$ determines the following partial order $\preccurlyeq$
on $E(S)$: 
\begin{center}
$e\preccurlyeq f$ if and only if $ef=fe=e$.
\end{center}
This order is
called the {\em natural partial order} on $E(S)$. 

\smallskip

If $S$ is an inverse semigroup then the semigroup operation on $S$ determines the following partial order $\preccurlyeq$
on $S$: $s\preccurlyeq t$ if and only if there exists $e\in E(S)$ such that $s=te$. This order is
called the {\em natural partial order} on $S$ \cite{Wagner-1952}.

\smallskip

The \emph{bicyclic monoid} ${\mathscr{C}}(p,q)$ is the semigroup with the identity $1$ generated by two elements $p$ and $q$ subjected only to the condition $pq=1$. The semigroup operation on ${\mathscr{C}}(p,q)$ is determined as
follows:
\begin{equation*}
    q^kp^l\cdot q^mp^n=q^{k+m-\min\{l,m\}}p^{l+n-\min\{l,m\}}.
\end{equation*}
It is well known that the bicyclic monoid ${\mathscr{C}}(p,q)$ is a bisimple (and hence simple) combinatorial $E$-unitary inverse semigroup and every non-trivial congruence on ${\mathscr{C}}(p,q)$ is a group congruence \cite{Clifford-Preston-1961}.

\smallskip

On the set $\boldsymbol{B}_{\omega}=\omega\times\omega$ we define the semigroup operation ``$\cdot$'' in the following way
\begin{equation}\label{eq-1.1}
  (i_1,j_1)\cdot(i_2,j_2)=
  \left\{
    \begin{array}{ll}
      (i_1-j_1+i_2,j_2), & \hbox{if~} j_1\leqslant i_2;\\
      (i_1,j_1-i_2+j_2), & \hbox{if~} j_1\geqslant i_2.
    \end{array}
  \right.
\end{equation}
It is well known that the bicyclic monoid $\mathscr{C}(p,q)$ is isomorphic to the semigroup $\boldsymbol{B}_{\omega}$ by the mapping $\mathfrak{h}\colon \mathscr{C}(p,q)\to \boldsymbol{B}_{\omega}$, $q^kp^l\mapsto (k,l)$ (see: \cite[Section~1.12]{Clifford-Preston-1961} or \cite[Exercise IV.1.11$(ii)$]{Petrich-1984}).

\smallskip

Next we shall describe the construction which is introduced in \cite{Gutik-Mykhalenych=2020}.

\smallskip

Let $\mathscr{F}$ be an ${\omega}$-closed subfamily of $\mathscr{P}(\omega)$. On the set $\boldsymbol{B}_{\omega}\times\mathscr{F}$ we define the semigroup operation ``$\cdot$'' in the following way
\begin{equation}\label{eq-1.2}
  (i_1,j_1,F_1)\cdot(i_2,j_2,F_2)=
  \left\{
    \begin{array}{ll}
      (i_1-j_1+i_2,j_2,(j_1-i_2+F_1)\cap F_2), & \hbox{if~} j_1\leqslant i_2;\\
      (i_1,j_1-i_2+j_2,F_1\cap (i_2-j_1+F_2)), & \hbox{if~} j_1\geqslant i_2.
    \end{array}
  \right.
\end{equation}
In \cite{Gutik-Mykhalenych=2020} is proved that if the family $\mathscr{F}\subseteq\mathscr{P}(\omega)$ is ${\omega}$-closed then $(\boldsymbol{B}_{\omega}\times\mathscr{F},\cdot)$ is a semigroup. Moreover, if an ${\omega}$-closed family  $\mathscr{F}\subseteq\mathscr{P}(\omega)$ contains the empty set $\varnothing$ then the set
\begin{equation*}
  \boldsymbol{I}=\{(i,j,\varnothing)\colon i,j\in\omega\}
\end{equation*}
is an ideal of the semigroup $(\boldsymbol{B}_{\omega}\times\mathscr{F},\cdot)$. For any ${\omega}$-closed family $\mathscr{F}\subseteq\mathscr{P}(\omega)$ the following semigroup
\begin{equation*}
  \boldsymbol{B}_{\omega}^{\mathscr{F}}=
\left\{
  \begin{array}{ll}
    (\boldsymbol{B}_{\omega}\times\mathscr{F},\cdot)/\boldsymbol{I}, & \hbox{if~} \varnothing\in\mathscr{F};\\
    (\boldsymbol{B}_{\omega}\times\mathscr{F},\cdot), & \hbox{if~} \varnothing\notin\mathscr{F}
  \end{array}
\right.
\end{equation*}
is defined in \cite{Gutik-Mykhalenych=2020}. The semigroup $\boldsymbol{B}_{\omega}^{\mathscr{F}}$ generalizes the bicyclic monoid and the countable semigroup of matrix units. In \cite{Gutik-Mykhalenych=2020} it is proven that $\boldsymbol{B}_{\omega}^{\mathscr{F}}$ is a combinatorial inverse semigroup and Green's relations, the natural partial order on $\boldsymbol{B}_{\omega}^{\mathscr{F}}$ and its set of idempotents are described.
Also, in \cite{Gutik-Mykhalenych=2020} the criteria when the semigroup $\boldsymbol{B}_{\omega}^{\mathscr{F}}$ is simple, $0$-simple, bisimple, $0$-bisimple, or it has the identity, are given.
In particularly in \cite{Gutik-Mykhalenych=2020} it is proven that the semigroup $\boldsymbol{B}_{\omega}^{\mathscr{F}}$ is isomorphic to the semigrpoup of ${\omega}{\times}{\omega}$-matrix units if and only if $\mathscr{F}$ consists of a singleton set and the empty set, and $\boldsymbol{B}_{\omega}^{\mathscr{F}}$ is isomorphic to the bicyclic monoid if and only if $\mathscr{F}$ consists of a non-empty inductive subset of $\omega$.

\smallskip

Group congruences on the semigroup  $\boldsymbol{B}_{\omega}^{\mathscr{F}}$ and its homomorphic retracts  in the case when an ${\omega}$-closed family $\mathscr{F}$ consists of inductive non-empty subsets of $\omega$ are studied in \cite{Gutik-Mykhalenych=2021}. It is proven that a congruence $\mathfrak{C}$ on $\boldsymbol{B}_{\omega}^{\mathscr{F}}$ is a group congruence if and only if its restriction on a subsemigroup of $\boldsymbol{B}_{\omega}^{\mathscr{F}}$, which is isomorphic to the bicyclic semigroup, is not the identity relation. Also in \cite{Gutik-Mykhalenych=2021}, all non-trivial homomorphic retracts and isomorphisms  of the semigroup $\boldsymbol{B}_{\omega}^{\mathscr{F}}$ are described. In \cite{Gutik-Mykhalenych=2022} it is proven that an injective endomorphism $\varepsilon$ of the semigroup $\boldsymbol{B}_{\omega}^{\mathscr{F}}$ is the indentity transformation if and only if  $\varepsilon$ has three distinct fixed points, which is equivalent to existence non-idempotent element $(i,j,[p))\in\boldsymbol{B}_{\omega}^{\mathscr{F}}$ such that  $(i,j,[p))\varepsilon=(i,j,[p))$.

\smallskip

In \cite{Gutik-Lysetska=2021, Lysetska=2020} the algebraic structure of the semigroup $\boldsymbol{B}_{\omega}^{\mathscr{F}}$ is established in the case when ${\omega}$-closed family $\mathscr{F}$ consists of atomic subsets of ${\omega}$. The structure of the semigroup $\boldsymbol{B}_{\omega}^{\mathscr{F}_n}$, for the family  $\mathscr{F}_n$ which is generated by the initial interval $\{0,1,\ldots,n\}$ of $\omega$, is studied in \cite{Gutik-Popadiuk=2023}. The semigroup of endomorphisms of $\boldsymbol{B}_{\omega}^{\mathscr{F}_n}$ is described in \cite{Gutik-Popadiuk=2022, Popadiuk=2022}.

\smallskip

In \cite{Gutik-Prokhorenkova-Sekh=2021} it is proven that the semigroup $\mathrm{\mathbf{End}}(\boldsymbol{B}_{\omega})$ of the endomorphisms of the bicyclic semigroup $\boldsymbol{B}_{\omega}$ is isomorphic to the semidirect products $(\omega,+)\rtimes_\varphi(\omega,*)$, where $+$ and $*$ are the usual addition and the usual multiplication on the set of non-negative integers $\omega$.

\smallskip

In the paper \cite{Gutik-Pozdniakova=2023}  injective endomorphisms of the semigroup $\boldsymbol{B}_{\omega}^{\mathscr{F}}$ with the two-elements family $\mathscr{F}$ of inductive nonempty subsets of $\omega$ are studies. Also, in \cite{Gutik-Pozdniakova=2023} the authors describe the elements of the semigroup $\boldsymbol{End}^1_*(\boldsymbol{B}_{\omega}^{\mathscr{F}})$ of all injective monoid endomorphisms of the monoid $\boldsymbol{B}_{\omega}^{\mathscr{F}}$, and show that Green's relations $\mathscr{R}$, $\mathscr{L}$, $\mathscr{H}$, $\mathscr{D}$, and $\mathscr{J}$  on $\boldsymbol{End}^1_*(\boldsymbol{B}_{\omega}^{\mathscr{F}})$ coincide with the relation of equality. In \cite{Gutik-Pozdniakova=2023a, Gutik-Pozdniakova=2023b} the semigroup $\boldsymbol{End}^1(\boldsymbol{B}_{\omega}^{\mathscr{F}})$ of all  monoid endomorphisms of the monoid $\boldsymbol{B}_{\omega}^{\mathscr{F}}$ is studied.

\smallskip

Later we assume that $\mathscr{F}^3$ is a family of inductive nonempty subsets of $\omega$ which consists of three sets.  By Proposition~1 of \cite{Gutik-Mykhalenych=2021} for any $\omega$-closed family $\mathscr{F}$ of inductive subsets in $\mathscr{P}(\omega)$ there exists an $\omega$-closed family $\mathscr{F}^*$ of inductive subsets in $\mathscr{P}(\omega)$ such that $[0)\in \mathscr{F}^*$ and the semigroups $\boldsymbol{B}_{\omega}^{\mathscr{F}}$ and $\boldsymbol{B}_{\omega}^{\mathscr{F}^*}$ are isomorphic. Hence without loss of generality we may assume that the family $\mathscr{F}$ contains the set $[0)$, i.e., $\mathscr{F}^3=\left\{[0),[1),[2)\right\}$. Later in the paper we denote $\mathscr{F}_{0,1}=\{[0),[1)\}$ and $\mathscr{F}_{1,2}=\{[1),[2)\}$ as subfamilies of $\mathscr{F}^3$.

\smallskip

In this paper we describe injective monoid endomorphisms of the semigroup $\boldsymbol{B}_{\omega}^{\mathscr{F}^3}$. Also, we show that the monoid $\boldsymbol{End}_*^1(\boldsymbol{B}_{\omega}^{\mathscr{F}})$ of all injective monoid endomorphisms of the semigroup $\boldsymbol{B}_{\omega}^{\mathscr{F}}$ is isomorphic to the multiplicative semigroup of positive integers.

\section{\textbf{Injective endomorphisms of the monoid $\boldsymbol{B}_{\omega}^{\mathscr{F}^3}$ are extensions of injective endomorphisms of its submonoid $\boldsymbol{B}_{\omega}^{\mathscr{F}_{0,1}}$}}\label{section-2}

If $\mathscr{F}$ is an arbitrary $\omega$-closed family $\mathscr{F}$ of inductive subsets in $\mathscr{P}(\omega)$ and $[s)\in \mathscr{F}$ for some $s\in \omega$ then
\begin{equation*}
  \boldsymbol{B}_{\omega}^{\{[s)\}}=\{(i,j,[s))\colon i,j\in\omega\}
\end{equation*}
is a subsemigroup of $\boldsymbol{B}_{\omega}^{\mathscr{F}}$ and by Proposition~3 of \cite{Gutik-Mykhalenych=2020} the semigroup $\boldsymbol{B}_{\omega}^{\{[s)\}}$ is isomorphic to the bicyclic semigroup.

\smallskip

Later we need the following theorem from \cite{Gutik-Mykhalenych=2022}.

\begin{theorem}[{\cite[Theorem~2]{Gutik-Mykhalenych=2022}}]\label{theorem-2.1}
Let $\mathscr{F}$ be an $\omega$-closed family of inductive nonempty subsets of $\omega$, which contains at least two sets. Then for an injective monoid endomorphism $\varepsilon$ of $\boldsymbol{B}_{\omega}^{\mathscr{F}}$ the following conditions are equivalent:
\begin{itemize}
  \item[$(i)$]   $\varepsilon$ is the identity map;
  \item[$(ii)$]  there exists a nonidempotent element $(i,j,[p))\in\boldsymbol{B}_{\omega}^{\mathscr{F}}$ such that $(i,j,[p))\varepsilon=(i,j,[p))$;
  \item[$(iii)$] the map $\varepsilon$ has at least three fixed points.
\end{itemize}
\end{theorem}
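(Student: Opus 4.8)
The plan is to reduce an arbitrary injective monoid endomorphism $\varepsilon$ to one numerical parameter together with its values on the idempotents $(0,0,[s))$, and then to read all three conditions off that data. Assume (as in the introduction, using $\omega$-closedness) that $[0)\in\mathscr{F}$, so that $(0,0,[0))$ is the identity and $\{s\in\omega\colon[s)\in\mathscr{F}\}$ is an initial segment of $\omega$. Write $\boldsymbol{a}=(0,1,[0))$ and $\boldsymbol{b}=(1,0,[0))$; then $\boldsymbol{a}\boldsymbol{b}=(0,0,[0))$ and every element factors as $(i,j,[s))=\boldsymbol{b}^{\,i}(0,0,[s))\boldsymbol{a}^{\,j}$. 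The first computational step is to determine $\varepsilon(\boldsymbol{a})$ and $\varepsilon(\boldsymbol{b})$: from $\varepsilon(\boldsymbol{a})\varepsilon(\boldsymbol{b})=(0,0,[0))$ a short analysis of which pairs of elements multiply to the identity (the third coordinate of a product being contained in the right factor forces both factors into the $[0)$-component) yields $\varepsilon(\boldsymbol{a})=(0,k,[0))$ and $\varepsilon(\boldsymbol{b})=(k,0,[0))$ for some $k\in\omega$, with $k\geqslant1$ by injectivity. Writing $\varepsilon((0,0,[s)))=(m_s,m_s,G_s)$ (an idempotent, with $m_0=0$, $G_0=[0)$), the factorization gives the closed form
\[
  (i,j,[s))\varepsilon=(ki+m_s,\,kj+m_s,\,G_s),
\]
so that $\varepsilon$ is completely encoded by $k$ and the pairs $(m_s,G_s)$.

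The two implications $(i)\Rightarrow(ii)$ and $(i)\Rightarrow(iii)$ are immediate, since the identity fixes the nonidempotent $(0,1,[0))$ and, indeed, everything. For the substantive content I would first prove $(ii)\Rightarrow(i)$. If $(i,j,[p))$ with $i\neq j$ is fixed, the closed form gives $k(i-j)=i-j$, hence $k=1$, together with $m_p=0$ and $G_p=[p)$. Once $k=1$ we have $\varepsilon(\boldsymbol{a})=\boldsymbol{a}$ and $\varepsilon(\boldsymbol{b})=\boldsymbol{b}$, so $\varepsilon$ fixes the whole bicyclic submonoid $\boldsymbol{B}_{\omega}^{\{[0)\}}$, and it remains only to show each $(0,0,[s))$ is fixed. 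I would do this by pushing the shift relations $\boldsymbol{b}(0,0,[s))=(0,0,[s{+}1))\boldsymbol{b}$ and $\boldsymbol{a}(0,0,[s))=(0,0,[s{-}1))\boldsymbol{a}$ through $\varepsilon$ and inducting on $s$: applying $\varepsilon$ to the first relation leaves only two possibilities for $\varepsilon((0,0,[s{+}1)))$, and the possibility $m_{s+1}\geqslant1$ would send $(0,0,[s{+}1))$ to $(1,1,[s))=\varepsilon((1,1,[s)))$, contradicting injectivity; the surviving possibility forces $m_{s+1}=0$ and $G_{s+1}=[s{+}1)$. Hence all idempotents are fixed and $\varepsilon$ is the identity.

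Finally $(iii)\Rightarrow(ii)$, argued contrapositively. Since $k=1$ already forces $(ii)$ (the bicyclic part being pointwise fixed), $\neg(ii)$ gives $k\geqslant2$, and I must bound the fixed points. By the closed form a fixed nonidempotent would again force $k=1$, so every fixed point is idempotent; and a fixed idempotent $(m,m,[s))$ satisfies $m(1-k)=m_s\geqslant0$, whence $m=0$ and $(0,0,[s))$ is fixed with $m_s=0$, $G_s=[s)$. It therefore suffices to show that for $k\geqslant2$ no $(0,0,[s))$ with $s\geqslant2$ is fixed. Assuming such a fixed idempotent, running the image of $\boldsymbol{b}(0,0,[t))=(0,0,[t{+}1))\boldsymbol{b}$ backwards determines $\varepsilon((0,0,[s{-}r)))=(0,0,[\max\{0,\,s-rk\}))$ for $r=0,\dots,s$; since $k\geqslant2$ and $s\geqslant2$ there are at least two indices $r$ with $s-rk\leqslant0$, so two distinct idempotents share the image $(0,0,[0))$, contradicting injectivity. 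Thus at most $(0,0,[0))$ and $(0,0,[1))$ are fixed, giving fewer than three fixed points, i.e.\ $\neg(iii)$.

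The main obstacle I anticipate is precisely these last two recursive arguments: setting up the forward and backward recursions coming from the shift relations and, in each case, invoking injectivity at exactly the right spot to eliminate the unwanted branch — the collision $(0,0,[s{+}1))\mapsto(1,1,[s))$ in the $k=1$ step, and the coincidence of images at $(0,0,[0))$ in the $k\geqslant2$ step. The bookkeeping depends on $\omega$-closedness guaranteeing that every intermediate set $[s{-}r)$ actually lies in $\mathscr{F}$, which is what makes the backward recursion legitimate.
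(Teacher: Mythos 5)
Your proposal is correct, but note that there is no in-paper proof to compare it against: Theorem~\ref{theorem-2.1} is imported verbatim, with citation, as Theorem~2 of \cite{Gutik-Mykhalenych=2022}, and the present paper never proves it. Measured against the toolkit this paper (and that source) uses for statements of this kind --- the classification of injective monoid endomorphisms of bicyclic-type submonoids via Theorem~1 of \cite{Gutik-Prokhorenkova-Sekh=2021} and Theorem~\ref{theorem-2.2}, the fact that each component $\boldsymbol{B}_{\omega}^{\{[s)\}}$ maps into a single component (Proposition~4 of \cite{Gutik-Mykhalenych=2021}), and natural-partial-order sandwiches through Proposition~1.4.21 of \cite{Lawson=1998}, exactly the devices of Propositions~\ref{proposition-2.5}--\ref{proposition-2.8} and Theorem~\ref{theorem-3.1} --- your route is genuinely different and essentially self-contained: you solve the unit equation $xy=(0,0,[0))$ directly to get $\varepsilon(\boldsymbol{a})=(0,k,[0))$ and $\varepsilon(\boldsymbol{b})=(k,0,[0))$, derive the closed form $(i,j,[s))\varepsilon=(ki+m_s,kj+m_s,G_s)$ from the factorization $(i,j,[s))=\boldsymbol{b}^{i}(0,0,[s))\boldsymbol{a}^{j}$, and then run the shift relation $\boldsymbol{b}(0,0,[s))=(0,0,[s{+}1))\boldsymbol{b}$ forwards (when $k=1$) and backwards (when $k\geqslant2$). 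I checked the computations and they hold: the unit equation does force the stated form of $\varepsilon(\boldsymbol{a})$, $\varepsilon(\boldsymbol{b})$; the upward step leaves exactly the branches $m_{s+1}=1$, $G_{s+1}=[s)$ (killed by the collision with $(1,1,[s))\varepsilon$) and $m_{s+1}=0$; the backward recursion gives $\varepsilon((0,0,[s{-}r)))=(0,0,[\max\{0,s-rk\}))$, and for $k\geqslant2$, $s\geqslant2$ already $r=s-1$ and $r=s$ produce the colliding image $(0,0,[0))$. One pinprick: in the upward induction at $s=0$ your ``surviving possibility'' $m_1=0$ still admits $G_1=[0)$ alongside $G_1=[1)$, and the former must be eliminated by the same injectivity move (collision with $(0,0,[0))\varepsilon$); this is a one-line fix. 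What your approach buys is a direct, generator-based proof whose only external inputs are the normalization $[0)\in\mathscr{F}$ (Proposition~1 of \cite{Gutik-Mykhalenych=2021}, which makes the index set an initial segment and legitimizes both recursions) and the description of idempotents as $(m,m,F)$; what the literature's approach buys is brevity, by leaning on the already-established endomorphism classifications and order-theoretic machinery.
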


Let $\mathscr{F}^2=\{[0),[1)\}$. For an arbitrary positive integer $k$ and any $p\in\{0,\ldots,k-1\}$ we define the transformation $\alpha_{k,p}$  of the semigroup $\boldsymbol{B}_{\omega}^{\mathscr{F}^2}$ in the following way
\begin{align*}
  (i,j,[0))\alpha_{k,p}&=(ki,kj,[0)), \\
  (i,j,[1))\alpha_{k,p}&=(p+ki,p+kj,[1)),
\end{align*}
for all $i,j\in\omega$. Also, for an arbitrary positive integer $k\geqslant 2$ and any $p\in\{1,\ldots,k-1\}$ we define the transformation $\beta_{k,p}$  of the semigroup $\boldsymbol{B}_{\omega}^{\mathscr{F}^2}$ in the following way
\begin{align*}
  (i,j,[0))\beta_{k,p}&=(ki,kj,[0)), \\
  (i,j,[1))\beta_{k,p}&=(p+ki,p+kj,[0)),
\end{align*}
for all $i,j\in\omega$.

\smallskip

The following theorem is proved in \cite{Gutik-Pozdniakova=2023}.

\begin{theorem}[{\cite[Theorem~1]{Gutik-Pozdniakova=2023}}]\label{theorem-2.2}
Let $\mathscr{F}^2=\{[0),[1)\}$ and $\varepsilon$ be an injective monoid endomorphism of  $\boldsymbol{B}_{\omega}^{\mathscr{F}^2}$. Then either there exist a positive integer $k$ and $p\in\{0,\ldots,k-1\}$ such that $\varepsilon=\alpha_{k,p}$ or there exist a positive integer $k\geqslant 2$ and $p\in\{1,\ldots,k-1\}$ such that $\varepsilon=\beta_{k,p}$.
\end{theorem}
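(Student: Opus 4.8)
The plan is to reduce the whole problem to computing the images of three generators. First I would record that $(0,0,[0))$ is the identity $1_S$ of $S:=\boldsymbol{B}_{\omega}^{\mathscr{F}^2}$, put $a=(0,1,[0))$, $b=(1,0,[0))$, $c=(0,0,[1))$, and check the normal forms $(i,j,[0))=b^i a^j$ and $(i,j,[1))=b^i c a^j$. Since these elements generate $S$ and $\varepsilon$ is a homomorphism, $\varepsilon$ is completely determined by the triple $\varepsilon(a),\varepsilon(b),\varepsilon(c)$; the task becomes to show that the only admissible triples are those producing $\alpha_{k,p}$ or $\beta_{k,p}$.

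Next I would pin down the action on the bicyclic submonoid $\boldsymbol{B}_{\omega}^{\{[0)\}}$. From $ab=1_S$ and $\varepsilon(1_S)=1_S$ we get $\varepsilon(a)\varepsilon(b)=(0,0,[0))$. A direct analysis of the equation $xy=(0,0,[0))$ in $S$ shows that its only solutions are $x=(0,m,[0))=a^m$ and $y=(m,0,[0))=b^m$ for some $m\in\omega$ (the demand that the $F$-coordinate of the product be $[0)$ already forces both factors to carry $[0)$). Hence $\varepsilon(a)=a^k$ and $\varepsilon(b)=b^k$ for some $k$, and injectivity rules out $k=0$ (which would give $\varepsilon(a)=1_S=\varepsilon(1_S)$); so $k\geqslant 1$ and $\varepsilon(i,j,[0))=(ki,kj,[0))$.

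The heart of the argument is to determine $\varepsilon(c)$. Being the image of an idempotent, $\varepsilon(c)$ is an idempotent of $S$, hence of the form $(p,p,G)$ with $G\in\{[0),[1)\}$. To constrain it I would apply $\varepsilon$ to the two relations $ac=a$ and $cb=b$, obtaining $(0,k,[0))\cdot(p,p,G)=(0,k,[0))$ and $(p,p,G)\cdot(k,0,[0))=(k,0,[0))$. Working out the two cases of the multiplication \eqref{eq-1.2} shows that for $G=[1)$ these force $p\leqslant k-1$, while for $G=[0)$ they force $p\leqslant k$. Injectivity then removes the boundary defects: $p=0$ with $G=[0)$ gives $\varepsilon(c)=1_S=\varepsilon(1_S)$, and $p=k$ with $G=[0)$ gives $\varepsilon(c)=(k,k,[0))=\varepsilon(1,1,[0))$, both contradicting injectivity. (Conceptually this is underpinned by the observation that $c$ is the unique coatom of the idempotent semilattice $E(S)$, so $\varepsilon(c)\preccurlyeq c$ automatically.) Thus $G=[1)$ yields $p\in\{0,\dots,k-1\}$ and $G=[0)$ yields $p\in\{1,\dots,k-1\}$ with $k\geqslant 2$.

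Finally I would reassemble the map: using $\varepsilon(i,j,[1))=\varepsilon(b)^i\,\varepsilon(c)\,\varepsilon(a)^j$ and the values above, a short computation gives $\varepsilon(i,j,[1))=(p+ki,p+kj,G)$. Comparing with the definitions, the case $G=[1)$ is exactly $\alpha_{k,p}$ and the case $G=[0)$ is exactly $\beta_{k,p}$, which is the claim. I expect the main obstacle to be the third step: the case analysis for $\varepsilon(c)$ is delicate because the $F$-coordinate is shifted by the operations $n+F$ and $-n+F$ inside \eqref{eq-1.2}, and one must treat the two branches $j_1\leqslant i_2$ and $j_1\geqslant i_2$ together with the boundary $p=k$ with care to separate the $\alpha$- and $\beta$-families cleanly.
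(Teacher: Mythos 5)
Your proof is correct, and all the key computations check out: $ab=1_S$ in $\boldsymbol{B}_{\omega}^{\mathscr{F}^2}$ really has only the solutions $x=a^m$, $y=b^m$ (the conditions $i_1-j_1+i_2=0$ and $j_2=0$ force $i_1=j_2=0$ and $i_2=j_1$, after which the $F$-coordinate condition $F_1\cap F_2=[0)$ forces $F_1=F_2=[0)$); the relations $ac=a$ and $cb=b$ do yield exactly the constraints $p\leqslant k-1$ for $G=[1)$ and $p\leqslant k$ for $G=[0)$; and injectivity eliminates precisely the boundary cases $(p,G)=(0,[0))$ and $(p,G)=(k,[0))$, since these would identify $\varepsilon(c)$ with $\varepsilon(1_S)$ and $\varepsilon((1,1,[0)))$ respectively. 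Be aware, though, that the paper you are reading contains no proof of this statement at all: Theorem~\ref{theorem-2.2} is imported verbatim from the reference \cite[Theorem~1]{Gutik-Pozdniakova=2023}, so there is no in-paper argument to compare against line by line. Judging from the techniques this paper uses in its own analogous results (Propositions~\ref{proposition-2.5}--\ref{proposition-2.8} and Theorem~\ref{theorem-3.1}), the authors' style is to pin down images of idempotents via the natural partial order on $E\big(\boldsymbol{B}_{\omega}^{\mathscr{F}}\big)$ (Lemma~5 of \cite{Gutik-Mykhalenych=2020}, Proposition~1.4.21 of \cite{Lawson=1998}) together with restriction lemmas to bicyclic subsemigroups, whereas you work from a monoid presentation with generators $a,b,c$ and normal forms $b^ia^j$, $b^ica^j$, extracting everything from the three relations $ab=1$, $ac=a$, $cb=b$. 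Your route is more elementary and self-contained (it needs no prior classification of endomorphisms of the bicyclic monoid and only the trivial fact that homomorphisms preserve idempotents), at the cost of the explicit case analysis of the shifted $F$-coordinates in \eqref{eq-1.2}, which you correctly flag as the delicate step; the order-theoretic approach of the paper's school scales more readily to larger families $\mathscr{F}$, as Section~\ref{section-2} illustrates. One cosmetic remark: your parenthetical claim that $c$ is the unique coatom of $E(S)$ is true but stronger than needed --- here $E(S)$ is in fact an $\omega$-chain $(0,0,[0))\succ(0,0,[1))\succ(1,1,[0))\succ(1,1,[1))\succ\cdots$, so every nonidentity idempotent lies below $c$, which is what the remark actually uses.
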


\begin{example}\label{example-2.3}
Let $\mathscr{F}^3=\{[0),[1),[2)\}$. Fix an arbitrary positive integer $k$. We define the transformation $\alpha_{[k]}$  of the semigroup $\boldsymbol{B}_{\omega}^{\mathscr{F}^3}$ in the following way
\begin{equation*}
  (i,j,[p))\alpha_{[k]}=
  \left\{
    \begin{array}{ll}
      (ki,kj,[p)), & \hbox{if~} p\in\{0,1\};\\
      (k(i+1)-1,k(j+1)-1,[2)), & \hbox{if~} p=2,
    \end{array}
  \right.
\end{equation*}
for all $i,j\in\omega$.
It is obvious that $\alpha_{[k]}$ is an injective transformation of the monoid $\boldsymbol{B}_{\omega}^{\mathscr{F}^3}$.
\end{example}

\begin{lemma}\label{lemma-2.4}
For an arbitrary positive integer $k$ the transformation $\alpha_{[k]}\colon \boldsymbol{B}_{\omega}^{\mathscr{F}^3}\to \boldsymbol{B}_{\omega}^{\mathscr{F}^3}$ is an injective monoid endomorphism of the semigroup $\boldsymbol{B}_{\omega}^{\mathscr{F}^3}$.
\end{lemma}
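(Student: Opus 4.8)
The plan is to verify separately that $\alpha_{[k]}$ preserves the identity, is injective, and is multiplicative. Preservation of the identity is immediate once one notes that the identity of $\boldsymbol{B}_{\omega}^{\mathscr{F}^3}$ is $(0,0,[0))$: a direct substitution into~\eqref{eq-1.2} (using $[0)=\omega$) shows $(0,0,[0))$ is a two-sided unit, and since $0\in\{0,1\}$ we have $(0,0,[0))\alpha_{[k]}=(0,0,[0))$. Injectivity is recorded already in Example~\ref{example-2.3}; it can also be read off from the uniform description below, because $\alpha_{[k]}$ fixes the third coordinate $[p)$ and, for each fixed $p$, acts injectively on $(i,j)$ as $k\geqslant 1$.

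For multiplicativity I would first rewrite $\alpha_{[k]}$ uniformly. Setting $\sigma(0)=\sigma(1)=0$ and $\sigma(2)=k-1$, one has $(i,j,[p))\alpha_{[k]}=(ki+\sigma(p),kj+\sigma(p),[p))$ for every $p\in\{0,1,2\}$. Fix $x_1=(i_1,j_1,[p_1))$, $x_2=(i_2,j_2,[p_2))$ and write $a_r=\sigma(p_r)$. The subcases with $p_1,p_2\in\{0,1\}$ need no new work: there $\alpha_{[k]}$ restricts to the endomorphism $\alpha_{k,0}$ of $\boldsymbol{B}_{\omega}^{\mathscr{F}_{0,1}}$ supplied by Theorem~\ref{theorem-2.2}. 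So the task is to handle the products in which at least one of $p_1,p_2$ equals $2$, by comparing $(x_1\alpha_{[k]})\cdot(x_2\alpha_{[k]})$ with $(x_1\cdot x_2)\alpha_{[k]}$ through~\eqref{eq-1.2}.

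The product of the images is governed by the sign of $(kj_1+a_1)-(ki_2+a_2)=k(j_1-i_2)+(a_1-a_2)$, whereas the original product is governed by the sign of $j_1-i_2$. Since $a_1-a_2\in\{-(k-1),0,k-1\}$, a one-line estimate gives that $j_1<i_2$ forces $kj_1+a_1<ki_2+a_2$ and $j_1>i_2$ forces $kj_1+a_1>ki_2+a_2$; hence off the diagonal both computations select the same branch of~\eqref{eq-1.2}. In these off-diagonal cases I would match coordinates directly: the first two transform linearly, and for the third I would use $(j_1-i_2+[p_1))\cap[p_2)=[\max(p_1+j_1-i_2,p_2))$ and check, over the finitely many pairs $(p_1,p_2)$, that $\max(p_1+k(j_1-i_2)+a_1-a_2,p_2)=\max(p_1+j_1-i_2,p_2)$ and that the shift $a_2$ agrees with $\sigma$ of this common value. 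The only mildly exceptional configuration is $p_1=2$, $p_2=0$, $i_2-j_1=1$, where the common value is $1$ rather than $0$; but $\sigma(1)=\sigma(0)=0$, so the shifts still coincide.

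The main obstacle is the diagonal case $j_1=i_2$. Here the original product is unambiguous, because the two branches of~\eqref{eq-1.2} agree and return $(i_1,j_2,[\max(p_1,p_2)))$, whereas the image product may land in either branch according to the sign of $a_1-a_2$. If $a_1=a_2$ (that is, $p_1,p_2\in\{0,1\}$ or $p_1=p_2=2$) the branches still align and the verification is routine. The delicate subcases are those with exactly one of $p_1,p_2$ equal to $2$: then $a_1-a_2=\pm(k-1)\neq0$ forces the image product into the branch opposite to the one one would naively use, and I must check that the subset intersection appearing there, namely $[2)\cap(-(k-1)+[p_2))$ or its mirror $(-(k-1)+[p_1))\cap[2)$, collapses to $[2)$, and that both of the first two coordinates become $k(\cdot)+(k-1)$. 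Confirming this boundary coherence is exactly what shows the result still equals $(x_1\cdot x_2)\alpha_{[k]}=(i_1,j_2,[2))\alpha_{[k]}$, and it is the crux of the lemma.
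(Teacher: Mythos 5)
Your proof is correct, and it reorganizes the verification rather than following the paper's decomposition. The paper reduces to $k\geqslant 2$ and then invokes Lemma~2 of \cite{Gutik-Pozdniakova=2023} \emph{twice}: the restriction of $\alpha_{[k]}$ to $\boldsymbol{B}_{\omega}^{\mathscr{F}_{0,1}}$ is the known endomorphism $\alpha_{k,0}$, and the restriction to $\boldsymbol{B}_{\omega}^{\mathscr{F}_{1,2}}$ is (after the shift isomorphism of Proposition~1 of \cite{Gutik-Mykhalenych=2021}) the known endomorphism $\alpha_{k,k-1}$; this leaves only the two mixed products $(i_0,j_0,[0))\cdot(i_2,j_2,[2))$ and $(i_2,j_2,[2))\cdot(i_0,j_0,[0))$, which the paper grinds out branch by branch, dismissing the spurious boundary branches with the remark that equalities like $j_0=i_2+1-1/k$ are impossible for $k\geqslant 2$. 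You use the two-element result only for $\mathscr{F}_{0,1}$ and handle \emph{all} products involving $[2)$ — including the $[1)$--$[2)$ and $[2)$--$[2)$ ones the paper gets for free — through the uniform shift function $\sigma(0)=\sigma(1)=0$, $\sigma(2)=k-1$. Your estimate $|k(j_1-i_2)|\geqslant k>k-1\geqslant|a_1-a_2|$ is exactly the clean form of the paper's ``$1/k$'' observations and correctly yields branch agreement off the diagonal, and your diagonal analysis (branch flip when exactly one index equals $2$, with $[2)\cap(-(k-1)+[p_2))=[2)$ and both outer coordinates becoming $k(\cdot)+(k-1)$) isolates precisely what the paper's rows $j_0=i_2$ and $j_2=i_0$ verify; I checked that $\max\bigl(p_1+k(j_1-i_2)+a_1-a_2,\,p_2\bigr)=\max\bigl(p_1+j_1-i_2,\,p_2\bigr)$ and that the surviving shift equals $\sigma$ of the common value in every case. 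Your uniform bookkeeping buys a shorter, scalable argument (it would extend to longer chains $\mathscr{F}^n$ with the obvious $\sigma$), while the paper buys fewer fresh computations by exploiting the second two-element subfamily. Two small repairs: in the $j_1>i_2$ branch there is the mirror exceptional configuration $p_2=2$, $p_1\in\{0,1\}$, $j_1-i_2=1$, where the common value is again $1$ and the surviving shift is $a_1=0=\sigma(1)$ (this is the paper's case $j_0=i_2+1$ producing $(i_0,j_2+1,[1))$), so ``the only'' should read ``the only, up to the mirror branch''; and Theorem~\ref{theorem-2.2} is a classification statement and does not itself assert that $\alpha_{k,0}$ is an endomorphism of $\boldsymbol{B}_{\omega}^{\mathscr{F}_{0,1}}$ — for that fact cite Lemma~2 of \cite{Gutik-Pozdniakova=2023}, as the paper does.
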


\begin{proof}
It is obvious that in the case when $k=1$ the map $\alpha_{[k]}$ is the identity transformation of the monoid $\boldsymbol{B}_{\omega}^{\mathscr{F}^3}$, i.e., $\alpha_{[k]}$ is an automorphism of $\boldsymbol{B}_{\omega}^{\mathscr{F}^3}$, and hence later without loss of generality we may assume that $k\geqslant 2$.

\smallskip

By Lemma~2 of \cite{Gutik-Pozdniakova=2023} the restrictions of the map $\alpha_{[k]}$ onto the subsemigroups $\boldsymbol{B}_{\omega}^{\mathscr{F}_{0,1}}$ and $\boldsymbol{B}_{\omega}^{\mathscr{F}_{1,2}}$ of $\boldsymbol{B}_{\omega}^{\mathscr{F}^3}$ are injective monoid endomorphism of  $\boldsymbol{B}_{\omega}^{\mathscr{F}_{0,1}}$ and $\boldsymbol{B}_{\omega}^{\mathscr{F}_{1,2}}$, respectively. Hence it is complete to show that the map $\alpha_{[k]}$ preserves the semigroup operation in the following two cases
\begin{equation*}
  (i_0,j_0,[0))\cdot(i_2,j_2,[2)) \qquad \hbox{and} \qquad (i_2,j_2,[2))\cdot(i_0,j_0,[0)).
\end{equation*}
We get that
\begin{align*}
  ((i_0,j_0,[0))&\cdot(i_2,j_2,[2)))\alpha_{[k]}=\\
  &=
  \left\{
    \begin{array}{ll}
      (i_0-j_0+i_2,j_2,(j_0-i_2+[0))\cap[2))\alpha_{[k]}, & \hbox{if~} j_0<i_2;\\
      (i_0,j_2,[0)\cap[2))\alpha_{[k]},                   & \hbox{if~} j_0=i_2;\\
      (i_0,j_0-i_2+j_2,[0)\cap(-1+[2)))\alpha_{[k]},      & \hbox{if~} j_0=i_2+1;\\
      (i_0,j_0-i_2+j_2,[0)\cap(i_2-j_0+[2)))\alpha_{[k]}, & \hbox{if~} j_0\geqslant i_2+2
    \end{array}
  \right.
\\
   &=
  \left\{
    \begin{array}{ll}
      (i_0-j_0+i_2,j_2,[2))\alpha_{[k]}, & \hbox{if~} j_0<i_2;\\
      (i_0,j_2,[2))\alpha_{[k]},         & \hbox{if~} j_0=i_2;\\
      (i_0,j_2+1,[1))\alpha_{[k]},       & \hbox{if~} j_0=i_2+1;\\
      (i_0,j_0-i_2+j_2,[0))\alpha_{[k]}, & \hbox{if~} j_0\geqslant i_2+2
    \end{array}
  \right.
\\
   &=
  \left\{
    \begin{array}{ll}
      (k(i_0-j_0+i_2+1)-1,k(j_2+1)-1,[2)), & \hbox{if~} j_0<i_2;\\
      (k(i_0+1)-1,k(j_2+1)-1,[2)),         & \hbox{if~} j_0=i_2;\\
      (ki_0,k(j_2+1),[1))),                & \hbox{if~} j_0=i_2+1;\\
      (ki_0,k(j_0-i_2+j_2),[0)),           & \hbox{if~} j_0\geqslant i_2+2,
    \end{array}
  \right.
\end{align*}
\begin{align*}
(i_0,&j_0,[0))\alpha_{[k]}\cdot(i_2,j_2,[2))\alpha_{[k]}= (ki_0,kj_0,[0))\cdot(k(i_2+1)-1,k(j_2+1)-1,[2))\\
   &=
  \left\{\!
    \begin{array}{l}
      (ki_0-kj_0+k(i_2+1)-1,k(j_2+1)-1,(kj_0-(k(i_2+1)-1)+[0))\cap[2)), \\
      \qquad \qquad \qquad \qquad \qquad \qquad \qquad \qquad \qquad \qquad \qquad \qquad \quad \hbox{if~} kj_0<k(i_2{+}1){-}1;\\
      (ki_0,k(j_2+1)-1,[0)\cap[2)), \qquad \qquad \qquad \qquad \qquad \qquad \; \, \hbox{if~} kj_0=k(i_2{+}1){-}1; \\
      (ki_0,kj_0-(k(i_2+1)-1)+k(j_2+1)-1,[0)\cap(k(i_2+1)-1-kj_0+[2))), \\
      \qquad \qquad \qquad \qquad \qquad \qquad \qquad \qquad \qquad \qquad \qquad \qquad \quad \hbox{if~} kj_0>k(i_2{+}1){-}1
    \end{array}
  \right.
  \\
   &=
  \left\{\!
    \begin{array}{ll}
      (k(i_0-j_0+i_2+1)-1,k(j_2+1)-1,[2)),                & \hbox{if~} j_0<i_2{+}1{-}1/k;\\
      (ki_0,k(j_2+1)-1,[2)),                              & \hbox{if~} j_0=i_2{+}1{-}1/k; \\
      (ki_0,k(j_0-i_2+j_2),[0)\cap(k(i_2+1)-1-kj_0+[2))), & \hbox{if~} j_0>i_2{+}1{-}1/k;
    \end{array}
  \right.
  \\
   &=
  \left\{\!
    \begin{array}{ll}
      (k(i_0-j_0+i_2+1)-1,k(j_2+1)-1,[2)), & \hbox{if~} j_0<i_2;\\
      (k(i_0+1)-1,k(j_2+1)-1,[2)),         & \hbox{if~} j_0=i_2;\\
      (ki_0,k(j_2+1),[1))),                & \hbox{if~} j_0=i_2+1;\\
      (ki_0,k(j_0-i_2+j_2),[0)),           & \hbox{if~} j_0\geqslant i_2+2,
    \end{array}
  \right.
\end{align*}
because $k\geqslant 2$ and the equality $j_0=i_2+1-1/k$ is impossible; and

\vskip20pt

\begin{align*}
  ((i_2,j_2,[2))\cdot(i_0,j_0,[0)))\alpha_{[k]} &=
   \left\{
    \begin{array}{ll}
      (i_2-j_2+i_0,j_0,(j_2-i_0+[2))\cap[0))\alpha_{[k]}, & \hbox{if~} j_2<i_0;\\
      (i_2,j_0,[2)\cap[0))\alpha_{[k]},                   & \hbox{if~} j_2=i_0;\\
      (i_2,j_2-i_0+j_0,[2)\cap(i_0-j_2+[0)))\alpha_{[k]}, & \hbox{if~} j_2>i_0
    \end{array}
  \right.
\\
   &=
  \left\{
    \begin{array}{ll}
      (i_2-j_2+i_0,j_0,[0))\alpha_{[k]}, & \hbox{if~} j_2+2\leqslant i_0;\\
      (i_2+1,j_0,[1))\alpha_{[k]},       & \hbox{if~} j_2+1=i_0;\\
      (i_2,j_0,[2))\alpha_{[k]},         & \hbox{if~} j_2=i_0;\\
      (i_2,j_2-i_0+j_0,[2))\alpha_{[k]}, & \hbox{if~} j_2>i_0
    \end{array}
  \right.
\\
   &=
  \left\{
    \begin{array}{ll}
      (k(i_2-j_2+i_0),kj_0,[0)),           & \hbox{if~} j_2+2\leqslant i_0;\\
      (k(i_2+1),kj_0,[1)),                 & \hbox{if~} j_2+1=i_0;\\
      (k(i_2+1)-1,k(j_0+1)-1,[2)),         & \hbox{if~} j_2=i_0;\\
      (k(i{+}1){-}1_2,k(j_2{-}i_0{+}j_0{+}1){-}1,[2)), & \hbox{if~} j_2>i_0,
    \end{array}
  \right.
\end{align*}

\begin{align*}
(i_2,&j_2,[2))\alpha_{[k]}\cdot(i_0,j_0,[0))\alpha_{[k]}=(k(i_2+1)-1,k(j_2+1)-1,[2))\cdot(ki_0,kj_0,[0))  \\
   &=
  \left\{
    \begin{array}{l}
      (k(i_2+1)-1-(k(j_2+1)-1)+ki_0,kj_0,(k(j_2+1)-1-ki_0+[2))\cap[0)), \\
      \qquad \qquad \qquad \qquad \qquad \qquad \qquad \qquad \qquad \qquad \qquad \qquad \quad \hbox{if~} k(j_2{+}1){-}1<ki_0;\\
      (k(i_2+1)-1,kj_0,[2)\cap[0)),
      \qquad \qquad \qquad \qquad \qquad \qquad \; \,
      \hbox{if~} k(j_2{+}1){-}1=ki_0;\\
      (k(i_2+1)-1,k(j_2+1)-1-ki_0+kj_0,[2)\cap(ki_0-(k(j_2+1)-1)+[0))), \\
      \qquad \qquad \qquad \qquad \qquad \qquad \qquad \qquad \qquad \qquad \qquad \qquad \quad \hbox{if~} k(j_2{+}1){-}1>ki_0
    \end{array}
  \right.
   \\
   &=
  \left\{
    \begin{array}{ll}
      (k(i_2-j_2+i_0),kj_0,(k(j_2+1)-1-ki_0+[2))), & \hbox{if~} j_2+1<i_0+1/k;\\
      (k(i_2+1)-1,kj_0,[2)),                       & \hbox{if~} j_2+1=i_0+1/k;\\
      (k(i_2+1)-1,k(j_2-i_0+j_0+1)-1,[2)),         & \hbox{if~} j_2+1>i_0+1/k
    \end{array}
  \right.
   \\
   &=
  \left\{
    \begin{array}{ll}
      (k(i_2-j_2+i_0),kj_0,[0)),                  & \hbox{if~} j_2+2\leqslant i_0;\\
      (k(i_2+1),kj_0,(k(j_2+1)-1-ki_0+[2))), & \hbox{if~} j_2+1=i_0;\\
      (k(i_2+1)-1,k(j_0+1)-1,[2)),         & \hbox{if~} j_2=i_0;\\
      (k(i+1)-1_2,k(j_2-i_0+j_0+1)-1,[2)), & \hbox{if~} j_2>i_0,
    \end{array}
  \right.
\end{align*}
because $k\geqslant 2$ and the equality $j_2+1=i_0+1/k$ is impossible. This completes the proof of the lem\-ma.
\end{proof}

\begin{remark}
Proposition~\ref{proposition-2.5} implies that for any positive integer $k$ the endomorphism $\alpha_{[k]}$ of $\boldsymbol{B}_{\omega}^{\mathscr{F}^3}$ is a extension of the endomorphism $\alpha_{k,0}$  of its subsemigroup $\boldsymbol{B}_{\omega}^{\mathscr{F}_{0,1}}$.
\end{remark}

\begin{proposition}\label{proposition-2.5}
Let $\varepsilon$ be an injective monoid endomorphism of  $\boldsymbol{B}_{\omega}^{\mathscr{F}^3}$ such that
\begin{equation*}
(0,0,[0))\varepsilon=(0,0,[0)), \qquad (0,0,[1))\varepsilon=(0,0,[1)), \qquad \hbox{and} \qquad (0,0,[2))\varepsilon\in \boldsymbol{B}_{\omega}^{\{[2)\}}.
\end{equation*}
Then there exists a positive integer $k$ such that $\varepsilon=\alpha_{[k]}$.
\end{proposition}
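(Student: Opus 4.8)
The plan is to exploit that $(0,0,[0))$ is the identity of the monoid $\boldsymbol{B}_{\omega}^{\mathscr{F}^3}$ together with the factorisation
$(i,j,F)=(i,0,[0))\cdot(0,0,F)\cdot(0,j,[0))$,
which holds for every $F\in\mathscr{F}^3$ and all $i,j\in\omega$ and is checked directly from \eqref{eq-1.2}. Consequently $\varepsilon$ is completely determined once we know the images of the two bicyclic generators $(1,0,[0))$ and $(0,1,[0))$ and of the three idempotents $(0,0,[0))$, $(0,0,[1))$, $(0,0,[2))$. Since the first two idempotents are fixed by hypothesis, the whole argument reduces to computing $\varepsilon$ on the bicyclic part $\boldsymbol{B}_{\omega}^{\{[0)\}}$ and then on $(0,0,[2))$.

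First I would determine $\varepsilon$ on $\boldsymbol{B}_{\omega}^{\{[0)\}}$. Since $\boldsymbol{B}_{\omega}^{\mathscr{F}^3}$ is an inverse semigroup, $\varepsilon$ preserves inverses, and the inversion here is $(i,j,F)\mapsto(j,i,F)$. From $(0,1,[0))\cdot(1,0,[0))=(0,0,[0))$ and $(0,1,[0))=(1,0,[0))^{-1}$ it follows that $u:=(1,0,[0))\varepsilon$ satisfies $u^{-1}u=(0,0,[0))$; writing $u=(x,y,G)$ and using $u^{-1}u=(y,y,G)$ forces $y=0$ and $G=[0)$, so $u=(x,0,[0))$, while injectivity rules out $x=0$. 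Putting $k=x\geqslant1$ and using $(i,j,[0))=(1,0,[0))^{i}(0,1,[0))^{j}$ gives $(i,j,[0))\varepsilon=(ki,kj,[0))$, and the factorisation above together with $(0,0,[1))\varepsilon=(0,0,[1))$ yields $(i,j,[1))\varepsilon=(ki,kj,[1))$. In particular $\varepsilon$ maps $\boldsymbol{B}_{\omega}^{\mathscr{F}_{0,1}}$ into itself and restricts there to $\alpha_{k,0}$; this last identification can alternatively be obtained from Theorem~\ref{theorem-2.2}, discarding the $\beta$-type and the shift $p\neq0$ by means of the fixed point $(0,0,[1))$.

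It remains to treat the $[2)$-component. By hypothesis $(0,0,[2))\varepsilon\in\boldsymbol{B}_{\omega}^{\{[2)\}}$, and being an idempotent it must equal $(a,a,[2))$ for some $a\in\omega$. To pin down $a$ I would use the relation $(0,0,[2))\cdot(1,0,[0))=(1,0,[1))$, which holds because $-1+[2)=[1)$ and which links the uncalibrated $[2)$-block to the already calibrated $[1)$-block. Applying $\varepsilon$ gives $(a,a,[2))\cdot(k,0,[0))=(k,0,[1))$, and a short case analysis of \eqref{eq-1.2} (the cases $a<k$, $a=k$, $a>k$) shows that the third coordinate equals $[1)$ only when $a<k$, in which case $a-k+2=1$, i.e. $a=k-1$. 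Feeding $(0,0,[2))\varepsilon=(k-1,k-1,[2))$ back into the factorisation then gives $(i,j,[2))\varepsilon=(ki+k-1,kj+k-1,[2))=(k(i+1)-1,k(j+1)-1,[2))$, which is exactly $\alpha_{[k]}$ on the $[2)$-component; comparison with the formulas for $[0)$ and $[1)$ yields $\varepsilon=\alpha_{[k]}$.

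The main obstacle is the last step: one must hit upon the correct linking identity $(0,0,[2))\cdot(1,0,[0))=(1,0,[1))$ transferring the known scaling $k$ from the $[1)$-block to the $[2)$-block, and then correctly resolve the boundary case of \eqref{eq-1.2} where a downward unit shift carries $[2)$ onto $[1)$. Everything else—the factorisation, the inverse-preservation computation, and the final substitution—is routine bookkeeping with \eqref{eq-1.2}.
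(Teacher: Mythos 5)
Your proof is correct, and it takes a genuinely more self-contained route than the paper's. The paper first disposes of the case $(0,0,[2))\varepsilon=(0,0,[2))$ by the fixed-point criterion (Theorem~\ref{theorem-2.1}), then invokes Lemma~2 of \cite{Gutik-Pozdniakova=2023} twice, to know that the restrictions of $\varepsilon$ to the submonoids $\boldsymbol{B}_{\omega}^{\mathscr{F}_{0,1}}$ and $\boldsymbol{B}_{\omega}^{\mathscr{F}_{1,2}}$ are injective monoid endomorphisms of those submonoids, and applies the two-element classification (Theorem~\ref{theorem-2.2}) to each restriction; this yields $(i,j,[0))\varepsilon=(ki,kj,[0))$, $(i,j,[1))\varepsilon=(ki,kj,[1))$ and $(i,j,[2))\varepsilon=(ki+s,kj+s,[2))$ with $s\in\{1,\ldots,k-1\}$, after which $s=k-1$ is pinned down by applying $\varepsilon$ to $(1,1,[0))\cdot(0,0,[2))=(1,1,[1))$ --- essentially your calibration identity $(0,0,[2))\cdot(1,0,[0))=(1,0,[1))$ in mirror form, exploiting the same downward unit shift $-1+[2)=[1)$. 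You differ in two substantive ways: you compute the bicyclic multiplier $k$ from scratch via inverse-preservation ($u^{-1}u=(0,0,[0))$ forces $u=(x,0,[0))$, with $x\geqslant 1$ by injectivity), rather than quoting Theorem~\ref{theorem-2.2}; and, more importantly, you replace the second appeal to Lemma~2 of \cite{Gutik-Pozdniakova=2023} and Theorem~\ref{theorem-2.2} by the factorisation $(i,j,F)=(i,0,[0))\cdot(0,0,F)\cdot(0,j,[0))$, which propagates $(0,0,[2))\varepsilon=(k-1,k-1,[2))$ to the entire $[2)$-block without ever needing to know that $\varepsilon$ restricts to an endomorphism of $\boldsymbol{B}_{\omega}^{\mathscr{F}_{1,2}}$. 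Your computation also absorbs the identity case ($k=1$, $a=0$, so $\alpha_{[1]}=\operatorname{id}$) into the general argument instead of treating it via Theorem~\ref{theorem-2.1}. What the paper's route buys is brevity given the machinery already developed in \cite{Gutik-Pozdniakova=2023}; what yours buys is independence from that machinery, at the cost of a few routine verifications with \eqref{eq-1.2}, all of which check out --- in particular your case analysis of $(a,a,[2))\cdot(k,0,[0))$ is right: for $a>k$ the third coordinate stays $[2)$, for $a\leqslant k$ it is $[\max\{0,a-k+2\})$, which equals $[1)$ precisely when $a=k-1$.
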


\begin{proof}[\textsl{Proof}]
If $(0,0,[2))\varepsilon=(0,0,[2))$ then by Theorem \ref{theorem-2.1} we get that $\varepsilon$ is the identity map of $\boldsymbol{B}_{\omega}^{\mathscr{F}^3}$, and hence $\varepsilon=\alpha_{[k]}$ for $k=1$.

\smallskip

Later we assume that $(0,0,[2))\varepsilon\neq(0,0,[2))$. By Lemma~2 of \cite{Gutik-Pozdniakova=2023} the restrictions of the map $\varepsilon$ onto the subsemigroup $\boldsymbol{B}_{\omega}^{\mathscr{F}_{0,1}}$ of $\boldsymbol{B}_{\omega}^{\mathscr{F}^3}$ is an injective monoid endomorphism of  $\boldsymbol{B}_{\omega}^{\mathscr{F}_{0,1}}$. The above arguments, the assumptions of the proposition, and Theorem~\ref{theorem-2.2} imply that there exists a positive integer $k$ such that
\begin{align*}
  (i,j,[0))\varepsilon&=(ki,kj,[0)), \\
  (i,j,[1))\varepsilon&=(ki,kj,[1)),
\end{align*}
for all $i,j\in\omega$. Hence the restrictions of the endomorphisn $\varepsilon$ onto the subsemigroup $\boldsymbol{B}_{\omega}^{\mathscr{F}_{0,1}}$ of $\boldsymbol{B}_{\omega}^{\mathscr{F}^3}$ coincides with injective monoid endomorphism $\alpha_{k,0}$ of  $\boldsymbol{B}_{\omega}^{\mathscr{F}_{0,1}}$.
Again, by Lemma~2 of \cite{Gutik-Pozdniakova=2023} the restrictions of the map $\varepsilon$ onto the subsemigroup $\boldsymbol{B}_{\omega}^{\mathscr{F}_{1,2}}$ of $\boldsymbol{B}_{\omega}^{\mathscr{F}^3}$ is an injective monoid endomorphism of  $\boldsymbol{B}_{\omega}^{\mathscr{F}_{1,2}}$. This, the above arguments, and Theorem~\ref{theorem-2.2} imply that there exists a positive integer $s\in\{1,\ldots,k-1\}$ such that
\begin{align*}
  (i,j,[2))\varepsilon&=(ki+s,kj+s,[1)),
\end{align*}
for all $i,j\in\omega$.

\smallskip

We claim that $s=k-1$. Indeed, the semigroup operation of $\boldsymbol{B}_{\omega}^{\mathscr{F}^3}$ implies that
\begin{align*}
  (1,1,[0))\cdot (0,0,[2))&=(1,1,[0)\cap(-1+[2)))=\\
                          &=(1,1,[0)\cap([1)))=\\
                          &=(1,1,[1)).
\end{align*}
Since $\varepsilon$ is an endomorphism of $\boldsymbol{B}_{\omega}^{\mathscr{F}^3}$, we get that
\begin{align*}
   (k,k,[1))&=(1,1,[1))\varepsilon= \\
   &=((1,1,[0))\cdot (0,0,[2)))\varepsilon=\\
   &=(1,1,[0))\varepsilon\cdot (0,0,[2))\varepsilon=\\
   &=(k,k,[0))\cdot (s,s,[2))=\\
   &=(k,k-s+s,[0)\cap(s-k+[2)))=\\
   &=(k,k,[0)\cap[s-k+2)),
\end{align*}
which implies that $\max\{0, s-k+2\}=1$. Then $s-k+2=1$, and hence $s=k-1$.
\end{proof}

\begin{proposition}\label{proposition-2.6}
Let $\varepsilon$ be an injective monoid endomorphism of the semigroup $\boldsymbol{B}_{\omega}^{\mathscr{F}^3}$. If $(0,0,[0))\varepsilon=(0,0,[0))$ and $(0,0,[1))\varepsilon=(0,0,[1))$, then $\varepsilon=\alpha_{[k]}$ for some positive inte\-ger~$k$.
\end{proposition}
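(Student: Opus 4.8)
The plan is to deduce the statement from Proposition~\ref{proposition-2.5}: it suffices to show that the two fixed-point hypotheses already force $(0,0,[2))\varepsilon\in \boldsymbol{B}_{\omega}^{\{[2)\}}$, after which Proposition~\ref{proposition-2.5} yields $\varepsilon=\alpha_{[k]}$ directly. First I would pin down the behaviour of $\varepsilon$ on the submonoid $\boldsymbol{B}_{\omega}^{\mathscr{F}_{0,1}}$. By Lemma~2 of \cite{Gutik-Pozdniakova=2023} the restriction $\varepsilon|_{\boldsymbol{B}_{\omega}^{\mathscr{F}_{0,1}}}$ is an injective monoid endomorphism of $\boldsymbol{B}_{\omega}^{\mathscr{F}_{0,1}}=\boldsymbol{B}_{\omega}^{\mathscr{F}^2}$, so Theorem~\ref{theorem-2.2} applies. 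The hypotheses $(0,0,[0))\varepsilon=(0,0,[0))$ and $(0,0,[1))\varepsilon=(0,0,[1))$ exclude every $\beta_{k,p}$ (which sends the $[1)$-layer into the $[0)$-layer, so $(0,0,[1))\beta_{k,p}=(p,p,[0))\neq(0,0,[1))$) and force $p=0$ in $\alpha_{k,p}$; hence there is a positive integer $k$ with $(i,j,[0))\varepsilon=(ki,kj,[0))$ and $(i,j,[1))\varepsilon=(ki,kj,[1))$ for all $i,j\in\omega$.

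Next, since $(0,0,[2))$ is an idempotent and every idempotent of $\boldsymbol{B}_{\omega}^{\mathscr{F}^3}$ has the form $(n,n,[r))$, its image is $(0,0,[2))\varepsilon=(a,a,[r))$ for some $a\in\omega$ and $r\in\{0,1,2\}$, and the task is to prove $r=2$. The key is the ``bridging'' identity connecting the third generator to the already-known layers, namely $(1,1,[0))\cdot(0,0,[2))=(1,1,[1))$, which follows by a direct computation from \eqref{eq-1.2}. Applying the endomorphism $\varepsilon$ and the formulas from the previous paragraph turns this identity into $(k,k,[0))\cdot(a,a,[r))=(k,k,[1))$. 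Computing the left-hand product by \eqref{eq-1.2} and comparing coordinates with $(k,k,[1))$, I expect exactly two admissible solutions to survive: the desired $(a,r)=(k-1,2)$ and the spurious $(a,r)=(k,1)$.

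Finally I would discard the spurious case using injectivity: $(a,r)=(k,1)$ would give $(0,0,[2))\varepsilon=(k,k,[1))=(1,1,[1))\varepsilon$ with $(0,0,[2))\neq(1,1,[1))$, contradicting injectivity of $\varepsilon$. Hence $(0,0,[2))\varepsilon=(k-1,k-1,[2))\in \boldsymbol{B}_{\omega}^{\{[2)\}}$, and Proposition~\ref{proposition-2.5} completes the proof. The main obstacle is recognizing the right bridging relation: the whole argument hinges on producing an identity in $\boldsymbol{B}_{\omega}^{\mathscr{F}^3}$ whose factors lie in $\boldsymbol{B}_{\omega}^{\mathscr{F}_{0,1}}$ together with $(0,0,[2))$, so that applying $\varepsilon$ constrains the unknown layer $[r)$; the leftover degenerate solution must then be eliminated by injectivity rather than by the coordinate computation itself.
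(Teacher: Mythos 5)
Your proposal is correct and follows essentially the same route as the paper: restrict to $\boldsymbol{B}_{\omega}^{\mathscr{F}_{0,1}}$ and invoke Theorem~\ref{theorem-2.2} to get $(i,j,[0))\varepsilon=(ki,kj,[0))$ and $(i,j,[1))\varepsilon=(ki,kj,[1))$, use the idempotent form $(a,a,[r))$ of $(0,0,[2))\varepsilon$ together with the bridging identity $(1,1,[0))\cdot(0,0,[2))=(1,1,[1))$ to rule out $r\in\{0,1\}$, and then conclude via Proposition~\ref{proposition-2.5}. The only (cosmetic) difference is that you solve the resulting coordinate equation uniformly and discard the spurious solution $(a,r)=(k,1)$ by injectivity, whereas the paper treats the cases $r=1$ and $r=0$ separately, bounding $a$ first via the natural partial order (Proposition~1.4.21(6) of \cite{Lawson=1998} and Lemma~5 of \cite{Gutik-Mykhalenych=2020}) before reaching the same contradictions.
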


\begin{proof}
Suppose that $(0,0,[2))\varepsilon\in \boldsymbol{B}_{\omega}^{\{[1)\}}$. Since $(0,0,[0))\varepsilon=(0,0,[0))$ and $(0,0,[1))\varepsilon=(0,0,[1))$, Theorem~\ref{theorem-2.2} implies that there exists a positive integer $k$ such that $(i,j,[0))\varepsilon=(ki,kj,[0))$ and $(i,j,[1))\varepsilon=(ki,kj,[1))$ for all $i,j\in\omega$. Since $(0,0,[2))$ is an idempotent of $\boldsymbol{B}_{\omega}^{\mathscr{F}^3}$, Proposition~1.4.21(2) of \cite{Lawson=1998} implies so is $(0,0,[2))\varepsilon$. By Lemma~2 of \cite{Gutik-Mykhalenych=2020} there exists $s\in\omega$ such that $(0,0,[2))\varepsilon=(s,s,[1))$. The inequalities $(1,1,[1))\preccurlyeq(0,0,[2))\preccurlyeq(0,0,[1))$ and Proposition~1.4.21(6) of \cite{Lawson=1998} imply that
\begin{align*}
  (k,k,[1))&=(1,1,[1))\varepsilon \preccurlyeq\\
           &\preccurlyeq(0,0,[2))\varepsilon=\\
           &=(s,s,[1))\preccurlyeq\\
           &\preccurlyeq(0,0,[1))=\\
           &=(0,0,[1))\varepsilon.
\end{align*}
Since the endomorphism $\varepsilon$ is an injective map, Lemma~5 of \cite{Gutik-Mykhalenych=2020} implies that $0<s<k$. The semigroup operation of $\boldsymbol{B}_{\omega}^{\mathscr{F}^3}$ implies that
\begin{align*}
  (1,1,[0))\cdot (0,0,[2))&=(1,1,[0)\cap(-1+[2)))=\\
                          &=(1,1,[0)\cap([1)))=\\
                          &=(1,1,[1)),
\end{align*}
and hence we get that
\begin{align*}
  (k,k,[1))&=(1,1,[1))\varepsilon= \\
   &=((1,1,[0))\cdot (0,0,[2)))\varepsilon=\\
   &=(1,1,[0))\varepsilon\cdot (0,0,[2))\varepsilon=\\
   &=(s,s,[1))\cdot (k,k,[0))=\\
   &=(s-s+k,k,(s-k+[1))\cap[0))=\\
   &=(k,k,[0)),
\end{align*}
because $s<k$. The obtained contradiction implies that $(0,0,[2))\varepsilon\notin \boldsymbol{B}_{\omega}^{\{[1)\}}$.

\smallskip

Suppose that $(0,0,[2))\varepsilon\in \boldsymbol{B}_{\omega}^{\{[0)\}}$. Since  $(0,0,[2))$ is an idempotent of $\boldsymbol{B}_{\omega}^{\mathscr{F}^3}$,  Proposition~1.4.21(2) of \cite{Lawson=1998} and Lemma~2 of \cite{Gutik-Mykhalenych=2020} imply that there exists $t\in\omega$ such that $(0,0,[2))\varepsilon=(t,t,[0))$. The semigroup operation of $\boldsymbol{B}_{\omega}^{\mathscr{F}^3}$ implies that
\begin{align*}
  (1,1,[0))\cdot (0,0,[2))&=(1,1,[0)\cap(-1+[2)))=\\
                          &=(1,1,[0)\cap([1)))=\\
                          &=(1,1,[1)),
\end{align*}
and by Theorem~\ref{theorem-2.2} we get that there exist a positive integer $k$ such that $(i,j,[0))\varepsilon=(ki,kj,[0))$ and $(i,j,[1))\varepsilon=(ki,kj,[1))$ for all $i,j\in\omega$. Then we have that
\begin{align*}
  (k,k,[1))&=(1,1,[1))\varepsilon= \\
   &=((1,1,[0))\cdot (0,0,[2)))\varepsilon=\\
   &=(1,1,[0))\varepsilon\cdot (0,0,[2))\varepsilon=\\
   &=(t,t,[0))\cdot (k,k,[0))=\\
   &=(\max\{t,k\},\max\{t,k\},[0))\in \boldsymbol{B}_{\omega}^{\{[0)\}},
\end{align*}
a contradiction. Hence $(0,0,[2))\varepsilon\notin \boldsymbol{B}_{\omega}^{\{[0)\}}$.

\smallskip

The above arguments imply that $(0,0,[2))\varepsilon\in \boldsymbol{B}_{\omega}^{\{[2)\}}$. Next we apply Propositi\-on~\ref{proposition-2.5}.
\end{proof}

\begin{proposition}\label{proposition-2.7}
For an arbitrary injective monoid endomorphism $\varepsilon$ of the semigroup $\boldsymbol{B}_{\omega}^{\mathscr{F}^3}$ there exist no a positive integers $k$ and $p\in\{1,\ldots,k-1\}$ such that the restriction $\varepsilon{\downharpoonright}_{\boldsymbol{B}_{\omega}^{\mathscr{F}_{0,1}}}$ of the map $\varepsilon$ onto the subsemigroup $\boldsymbol{B}_{\omega}^{\mathscr{F}_{0,1}}$ of $\boldsymbol{B}_{\omega}^{\mathscr{F}^3}$ coincides with the endomorphism $\alpha_{k,p}$ of $\boldsymbol{B}_{\omega}^{\mathscr{F}_{0,1}}$.
\end{proposition}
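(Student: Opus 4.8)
The plan is to argue by contradiction. Suppose there were a positive integer $k$ and $p\in\{1,\ldots,k-1\}$ with $\varepsilon{\downharpoonright}_{\boldsymbol{B}_{\omega}^{\mathscr{F}_{0,1}}}=\alpha_{k,p}$, so that $(i,j,[0))\varepsilon=(ki,kj,[0))$ and $(i,j,[1))\varepsilon=(p+ki,p+kj,[1))$ for all $i,j\in\omega$; in particular $(1,1,[1))\varepsilon=(p+k,p+k,[1))$. Since $\varepsilon$ is an endomorphism it preserves idempotents, and the idempotents of $\boldsymbol{B}_{\omega}^{\mathscr{F}^3}$ are exactly the triples $(m,m,F)$ with $F\in\mathscr{F}^3$; hence $(0,0,[2))\varepsilon=(m,m,F)$ for some $m\in\omega$ and $F\in\{[0),[1),[2)\}$. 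The whole argument will hinge on computing this single image.

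The key step is to feed the identity $(1,1,[0))\cdot(0,0,[2))=(1,1,[1))$ (already used in the proofs of Propositions~\ref{proposition-2.5} and~\ref{proposition-2.6}) through $\varepsilon$, which gives $(k,k,[0))\cdot(m,m,F)=(p+k,p+k,[1))$. I would then evaluate the left-hand product by the operation~\eqref{eq-1.2}, distinguishing the cases $k\leqslant m$ and $k>m$. When $k>m$ the first coordinate of the product equals $k$, so matching with $(p+k,p+k,[1))$ would force $p=0$, which is excluded because $p\geqslant 1$; hence $k\leqslant m$, the product equals $(m,m,F)$, and comparing coordinates forces $m=p+k$ and $F=[1)$ (the alternatives $F=[0)$ and $F=[2)$ are ruled out since $[0)\neq[1)$ and $[2)\neq[1)$). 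Thus necessarily $(0,0,[2))\varepsilon=(p+k,p+k,[1))$. This short case analysis in the multiplication is the only place where genuine computation is needed and is the main, though entirely routine, obstacle.

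Finally I would simply compare the two images obtained above: $(0,0,[2))\varepsilon=(p+k,p+k,[1))=(1,1,[1))\varepsilon$. The elements $(0,0,[2))$ and $(1,1,[1))$ of $\boldsymbol{B}_{\omega}^{\mathscr{F}^3}$ are distinct, lying in the subsemigroups $\boldsymbol{B}_{\omega}^{\{[2)\}}$ and $\boldsymbol{B}_{\omega}^{\{[1)\}}$ respectively, yet $\varepsilon$ sends both to $(p+k,p+k,[1))$; this contradicts the injectivity of $\varepsilon$ and completes the proof. An essentially equivalent route to the same collision is to observe that $(1,1,[1))\preccurlyeq(0,0,[2))\preccurlyeq(0,0,[1))$ and that $\varepsilon$ preserves the natural partial order, but the multiplicative identity above already delivers the equality of the two images directly, so the partial-order remark is not strictly necessary.
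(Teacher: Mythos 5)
Your proof is correct, and it runs on the same engine as the paper's: feeding $(1,1,[0))\cdot(0,0,[2))=(1,1,[1))$ through $\varepsilon$ and comparing the result with $(1,1,[1))\varepsilon=(p+k,p+k,[1))$. The difference is in how the case analysis is organized, and your version is genuinely leaner. The paper splits into three cases according to whether $(0,0,[2))\varepsilon$ lies in $\boldsymbol{B}_{\omega}^{\{[2)\}}$, $\boldsymbol{B}_{\omega}^{\{[1)\}}$ or $\boldsymbol{B}_{\omega}^{\{[0)\}}$, and in each case first invokes the chain $(1,1,[1))\preccurlyeq(0,0,[2))\preccurlyeq(0,0,[1))$ together with Proposition~1.4.21(6) of \cite{Lawson=1998} and Lemma~5 of \cite{Gutik-Mykhalenych=2020} to obtain the bounds $p\leqslant t\leqslant k+p$ on the idempotent parameter, before running the multiplication to reach a contradiction in each branch separately. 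You instead leave $F$ undetermined, write $(0,0,[2))\varepsilon=(m,m,F)$, and split only on $k>m$ versus $k\leqslant m$ inside the single product $(k,k,[0))\cdot(m,m,F)$: for $k>m$ the first coordinate of the product is $k$, forcing $p=0$, which is excluded; for $k\leqslant m$ the product is $(m,m,F)$ (since $(k-m+[0))\cap F=F$ when $k-m\leqslant 0$ and $F=[f)$ with $f\geqslant 0$ --- worth spelling out this one line in a final write-up), forcing $m=p+k$, $F=[1)$, and hence the collision $(0,0,[2))\varepsilon=(p+k,p+k,[1))=(1,1,[1))\varepsilon$, contradicting injectivity. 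This treats all three possibilities for $F$ uniformly and dispenses with the natural-partial-order estimates altogether; in the paper's $[1)$-case those estimates (sharpened to $p<t<k+p$ by injectivity) are precisely what make $t=k+p$ contradictory, whereas your direct collision uses the same injectivity fact in a more transparent form. Both arguments rely on the same background facts you correctly cite: preservation of idempotents (Proposition~1.4.21(2) of \cite{Lawson=1998}) and the description of $E(\boldsymbol{B}_{\omega}^{\mathscr{F}^3})$ as the triples $(m,m,F)$ (Lemma~2 of \cite{Gutik-Mykhalenych=2020}).
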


\begin{proof}
Suppose to the contrary that exist a positive integer $k$ and $p\in\{1,\ldots,k-1\}$ such that   $\varepsilon{\downharpoonright}_{\boldsymbol{B}_{\omega}^{\mathscr{F}_{0,1}}}=\alpha_{k,p}$. Then we have that
\begin{align*}
  (i,j,[0))\varepsilon&=(ki,kj,[0)), \\
  (i,j,[1))\varepsilon&=(p+ki,p+kj,[1)),
\end{align*}
for all $i,j\in\omega$.

\smallskip

Suppose that $(0,0,[2))\varepsilon\in \boldsymbol{B}_{\omega}^{\{[2)\}}$.
By the choice of the integer $p$ and by the description of the natural partial order on $E(\boldsymbol{B}_{\omega}^{\mathscr{F}^3})$ (see Lemma~5 of \cite{Gutik-Mykhalenych=2020} or Proposition~3 in \cite{Gutik-Mykhalenych=2021}) we get that there exists a positive integer $t$ such that $(0,0,[2))\varepsilon=(t,t,[2))$. The semigroup operation of $\boldsymbol{B}_{\omega}^{\mathscr{F}^3}$ implies that
\begin{equation*}
  (1,1,[0))\cdot (0,0,[2))=(1,1,[1)),
\end{equation*}
and hence we have that
\begin{align*}
  (k,k,[0))\cdot (t,t,[2))&=(1,1,[0))\varepsilon\cdot (0,0,[2))\varepsilon=\\
                          &=(1,1,[1))\varepsilon=\\
                          &=(p+k,p+k,[1)).
\end{align*}
The structure of the natural partial order on $E(\boldsymbol{B}_{\omega}^{\mathscr{F}^3})$ (see Proposition~3 in \cite{Gutik-Mykhalenych=2021}) implies that
\begin{equation*}
(1,1,[1))\preccurlyeq (0,0,[2))\preccurlyeq (0,0,[1)).
\end{equation*}
Hence by Proposition~1.4.21(6) of \cite{Lawson=1998} we have that
\begin{align*}
  (p+k,p+k,[1))&=(1,1,[1))\varepsilon\preccurlyeq \\
  &\preccurlyeq (t,t,[2))=\\
  &=(0,0,[2))\varepsilon\preccurlyeq\\
  &\preccurlyeq(0,0,[1))\varepsilon=\\
  &=(p,p.[1)).
\end{align*}
The above arguments and Lemma~5 of \cite{Gutik-Mykhalenych=2020} imply that $p\leqslant t\leqslant k+p$. Then the equalities
\begin{align*}
  (p+k,p+k,[1))&=(k,k,[0))\cdot (t,t,[2))= \\
   &=
   \left\{
     \begin{array}{ll}
       (t,t,[2)),               & \hbox{if~} k\leqslant t;\\
       (k,k,[0)\cap (t-k+[2))), & \hbox{if~} k>t
     \end{array}
   \right.
\end{align*}
imply that $t-k=-1$ and $k=k+p$. The last equality contradicts the assumption.

\smallskip

Suppose that $(0,0,[2))\varepsilon\in \boldsymbol{B}_{\omega}^{\{[1)\}}$.
Then by the choice of the integer $p$ and by the structure of the natural partial order on $E(\boldsymbol{B}_{\omega}^{\mathscr{F}^3})$ (see Lemma~5 of \cite{Gutik-Mykhalenych=2020} or Proposition~3 in \cite{Gutik-Mykhalenych=2021}) we obtain that there exists a positive integer $t$ such that $(0,0,[2))\varepsilon=(t,t,[1))$. Since
\begin{equation*}
(1,1,[1))\preccurlyeq (0,0,[2))\preccurlyeq (0,0,[1)).
\end{equation*}
by Proposition~1.4.21(6) of \cite{Lawson=1998} we have that
\begin{align*}
  (p+k,p+k,[1))&=(1,1,[1))\varepsilon\preccurlyeq\\
               &\preccurlyeq(t,t,[1))=\\
               &=(0,0,[2))\varepsilon\preccurlyeq\\
               &\preccurlyeq(0,0,[1))\varepsilon=\\
               &=(p,p.[1)).
\end{align*}
The above arguments and Lemma~5 of \cite{Gutik-Mykhalenych=2020} imply that $p\leqslant t\leqslant k+p$. These inequalities and the injectivity of the map $\varepsilon$ imply that $p<t<k+p$. Then the equality
\begin{equation*}
  (1,1,[0))\cdot (0,0,[2))=(1,1,[1)),
\end{equation*}
imply that
\begin{align*}
  (p+k,p+k,[1))&=(k,k,[0))\cdot (t,t,[1))= \\
   &=
   \left\{
     \begin{array}{ll}
       (t,t,[1)), & \hbox{if~} k\leqslant t;\\
       (k,k,[0)), & \hbox{if~} k>t,
     \end{array}
   \right.
\end{align*}
and hence $t=k+p$, a contradiction.

\smallskip

Suppose that $(0,0,[2))\varepsilon\in \boldsymbol{B}_{\omega}^{\{[0)\}}$.
Then by the choice of the integer $p$ and the description of the natural partial order on $E(\boldsymbol{B}_{\omega}^{\mathscr{F}^3})$ (see Lemma~5 of \cite{Gutik-Mykhalenych=2020} or Proposition~3 in \cite{Gutik-Mykhalenych=2021}) we get that there exists a positive integer $t$ such that $(0,0,[2))\varepsilon=(t,t,[0))$. Since
\begin{equation*}
(1,1,[1))\preccurlyeq (0,0,[2))\preccurlyeq (0,0,[1)).
\end{equation*}
by Proposition~1.4.21(6) of \cite{Lawson=1998} we have that
\begin{align*}
  (p+k,p+k,[1))&=(1,1,[1))\varepsilon\preccurlyeq\\
               &\preccurlyeq(t,t,[0))=\\
               &=(0,0,[2))\varepsilon\preccurlyeq\\
               &\preccurlyeq(0,0,[1))\varepsilon=\\
               &=(p,p.[1)).
\end{align*}
The above arguments and Lemma~5 of \cite{Gutik-Mykhalenych=2020} imply that $p\leqslant t\leqslant k+p$. Since
\begin{equation*}
  (1,1,[0))\cdot (0,0,[2))=(1,1,[1)),
\end{equation*}
we obtain that
\begin{align*}
  (p+k,p+k,[1))&=(k,k,[0))\cdot (t,t,[0))=\\
               &=(\max\{k,t\},\max\{k,t\},[0)),
\end{align*}
a contradiction.

\smallskip

The obtained contradictions imply the statement of the proposition.
\end{proof}

\begin{proposition}\label{proposition-2.8}
For any injective monoid endomorphism $\varepsilon$ of the semigroup $\boldsymbol{B}_{\omega}^{\mathscr{F}^3}$ there exist no a positive integers $k\geqslant 2$ and $p\in\{1,\ldots,k-1\}$ such that the restriction $\varepsilon{\downharpoonright}_{\boldsymbol{B}_{\omega}^{\mathscr{F}_{0,1}}}$ of the map $\varepsilon$ onto the subsemigroup $\boldsymbol{B}_{\omega}^{\mathscr{F}_{0,1}}$ of $\boldsymbol{B}_{\omega}^{\mathscr{F}^3}$ coincides with the endomorphism $\beta_{k,p}$ of $\boldsymbol{B}_{\omega}^{\mathscr{F}_{0,1}}$.
\end{proposition}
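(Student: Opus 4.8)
The plan is to argue by contradiction, exactly in the style of Proposition~\ref{proposition-2.7}. Suppose there is a positive integer $k\geqslant 2$ and $p\in\{1,\ldots,k-1\}$ with $\varepsilon{\downharpoonright}_{\boldsymbol{B}_{\omega}^{\mathscr{F}_{0,1}}}=\beta_{k,p}$. By the definition of $\beta_{k,p}$ this means
\begin{align*}
  (i,j,[0))\varepsilon&=(ki,kj,[0)), \\
  (i,j,[1))\varepsilon&=(p+ki,p+kj,[0)),
\end{align*}
for all $i,j\in\omega$. The crucial difference from the $\alpha_{k,p}$ case is that now the whole subsemigroup $\boldsymbol{B}_{\omega}^{\mathscr{F}_{1,2}}$ is also forced: the restriction $\varepsilon{\downharpoonright}_{\boldsymbol{B}_{\omega}^{\mathscr{F}_{0,1}}}$ sends $(0,0,[1))$ into the $[0)$-block, so the element $(0,0,[2))$, being $\preccurlyeq$-below $(0,0,[1))$, will be constrained to land in a block governed by where $[1)$ went.

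\smallskip

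\textbf{Locating the image of the idempotent $(0,0,[2))$.} First I would record that $(0,0,[2))$ is an idempotent, hence by Proposition~1.4.21(2) of \cite{Lawson=1998} so is $(0,0,[2))\varepsilon$, and by Lemma~2 of \cite{Gutik-Mykhalenych=2020} it must be of the diagonal form $(t,t,[r))$ for some $t\in\omega$ and some $r\in\{0,1,2\}$. Then I would split into the three cases $r\in\{0,1,2\}$ as in Proposition~\ref{proposition-2.7}. In each case I use the chain of natural-partial-order inequalities $(1,1,[1))\preccurlyeq(0,0,[2))\preccurlyeq(0,0,[1))$ together with Proposition~1.4.21(6) of \cite{Lawson=1998} to transport it through $\varepsilon$, giving
\begin{equation*}
(p+k,p+k,[0))=(1,1,[1))\varepsilon\preccurlyeq(t,t,[r))=(0,0,[2))\varepsilon\preccurlyeq(0,0,[1))\varepsilon=(p,p,[0)).
\end{equation*}
By the description of the natural partial order on $E(\boldsymbol{B}_{\omega}^{\mathscr{F}^3})$ (Lemma~5 of \cite{Gutik-Mykhalenych=2020}, or Proposition~3 in \cite{Gutik-Mykhalenych=2021}) the lower and upper bounds are both in the $[0)$-block, which already forces $r=0$; moreover comparability pins down $p\leqslant t\leqslant p+k$.

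\smallskip

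\textbf{Deriving the contradiction via the product identity.} With $(0,0,[2))\varepsilon=(t,t,[0))$ in hand, I would exploit the same multiplicative relation used throughout the section,
\begin{equation*}
  (1,1,[0))\cdot (0,0,[2))=(1,1,[1)).
\end{equation*}
Applying $\varepsilon$ and using that it is a homomorphism gives
\begin{align*}
  (p+k,p+k,[0))&=(1,1,[1))\varepsilon=(1,1,[0))\varepsilon\cdot (0,0,[2))\varepsilon=\\
               &=(k,k,[0))\cdot (t,t,[0))=(\max\{k,t\},\max\{k,t\},[0)).
\end{align*}
This forces $p+k=\max\{k,t\}$; since $p\geqslant 1$ we get $\max\{k,t\}=p+k>k$, hence $t=p+k$, but then the first coordinate of $(p+k,p+k,[0))$ agrees while the identity $(p+k,p+k,[0))=(t,t,[0))=(p+k,p+k,[0))$ collapses $(1,1,[1))\varepsilon$ and $(0,0,[2))\varepsilon$ to the same element, contradicting injectivity of $\varepsilon$ (as $(1,1,[1))\neq(0,0,[2))$). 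Alternatively, the bound $t\leqslant p+k$ combined with $\max\{k,t\}=p+k$ and $k<p+k$ simply has no solution once injectivity excludes $t=p+k$. Either route yields the contradiction, completing the proof.

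\smallskip

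\textbf{Main obstacle.} The one genuinely delicate point is ruling out the degenerate possibility $r\in\{1,2\}$ cleanly: I must verify that the order chain really does land strictly inside the $[0)$-block rather than merely constraining $t$, so that the final product computation stays entirely within $\boldsymbol{B}_{\omega}^{\{[0)\}}$ and produces a $\max$-idempotent that can never equal the non-idempotent-type target $(p+k,p+k,[0))$ unless forced into an injectivity violation. Everything else is a routine transcription of the bookkeeping already carried out for $\alpha_{k,p}$ in Proposition~\ref{proposition-2.7}.
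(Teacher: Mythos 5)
Your setup and your case $r=0$ reproduce the paper's argument faithfully, but there is a genuine gap at exactly the step you flag as the ``main obstacle'': the claim that the chain $(p+k,p+k,[0))\preccurlyeq(t,t,[r))\preccurlyeq(p,p,[0))$ ``already forces $r=0$'' is false. The natural partial order on $E(\boldsymbol{B}_{\omega}^{\mathscr{F}^3})$ (Lemma~5 of \cite{Gutik-Mykhalenych=2020}, Proposition~3 of \cite{Gutik-Mykhalenych=2021}) admits comparabilities between idempotents lying in \emph{different} blocks: an idempotent $(i,i,[s))$ lies below $(j,j,[r))$ whenever $i\geqslant j$ and the shifted family containment holds, with no requirement that $s=r$. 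Indeed, the very chain you transport through $\varepsilon$, namely $(1,1,[1))\preccurlyeq(0,0,[2))\preccurlyeq(0,0,[1))$, has both endpoints in the $[1)$-block and its middle term in the $[2)$-block — a direct counterexample to the inference pattern you invoke. Consequently your proposal leaves the cases $(0,0,[2))\varepsilon=(t,t,[1))$ and $(0,0,[2))\varepsilon=(t,t,[2))$ untreated; in those cases the order chain yields only $p\leqslant t\leqslant p+k$, not a contradiction.

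The repair is the same computation you already carry out for $r=0$, run in the remaining cases, and it is exactly what the paper's proof does. Applying $\varepsilon$ to the identity $(1,1,[0))\cdot(0,0,[2))=(1,1,[1))$ gives, for $r=2$,
\begin{equation*}
(p+k,p+k,[0))=(k,k,[0))\cdot(t,t,[2))=
\left\{
\begin{array}{ll}
(t,t,[2)), & \hbox{if~} k\leqslant t;\\
(k,k,[0)\cap(t-k+[2))), & \hbox{if~} k>t,
\end{array}
\right.
\end{equation*}
and comparing families and first coordinates forces $k=p+k$, i.e., $p=0$, contradicting $p\in\{1,\ldots,k-1\}$; for $r=1$ the product is $(t,t,[1))$ if $k\leqslant t$ (wrong family) or $(k,k,[0))$ if $k>t$, again forcing $p=0$. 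Note that injectivity of $\varepsilon$ is needed only in the case $r=0$, to exclude $t=p+k$, and there your argument agrees with the paper's. With the two missing cases filled in this way, your proof becomes essentially identical to the paper's.
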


\begin{proof}
Suppose to the contrary that exist a positive integer $k$ and $p\in\{1,\ldots,k-1\}$ such that   $\varepsilon{\downharpoonright}_{\boldsymbol{B}_{\omega}^{\mathscr{F}_{0,1}}}=\beta_{k,p}$. Then we have that
\begin{align*}
  (i,j,[0))\varepsilon&=(ki,kj,[0)), \\
  (i,j,[1))\varepsilon&=(p+ki,p+kj,[0)),
\end{align*}
for all $i,j\in\omega$.

\smallskip

Suppose that $(0,0,[2))\varepsilon\in \boldsymbol{B}_{\omega}^{\{[2)\}}$.
Then by the choice of the integer $p$ and the description of the natural partial order on $E(\boldsymbol{B}_{\omega}^{\mathscr{F}^3})$ (see Lemma~5 of \cite{Gutik-Mykhalenych=2020} or Proposition~3 in \cite{Gutik-Mykhalenych=2021}) we obtain that there exists a positive integer $t$ such that $(0,0,[2))\varepsilon=(t,t,[2))$. Since
\begin{equation*}
(1,1,[1))\preccurlyeq (0,0,[2))\preccurlyeq (0,0,[1)).
\end{equation*}
by Proposition~1.4.21(6) of \cite{Lawson=1998} we have that
\begin{align*}
  (p+k,p+k,[1))&=(1,1,[1))\varepsilon\preccurlyeq\\
               &\preccurlyeq(t,t,[2))=\\
               &=(0,0,[2))\varepsilon\preccurlyeq\\
               &\preccurlyeq(0,0,[1))\varepsilon=\\
               &=(p,p.[1)).
\end{align*}
The above arguments and Lemma~5 of \cite{Gutik-Mykhalenych=2020} imply that $p\leqslant t\leqslant k+p$. The semigroup operation of $\boldsymbol{B}_{\omega}^{\mathscr{F}^3}$ implies that
\begin{equation*}
  (1,1,[0))\cdot (0,0,[2))=(1,1,[1)),
\end{equation*}
and hence we have that
\begin{align*}
  (k,k,[0))\cdot (t,t,[2))&=(1,1,[0))\varepsilon\cdot (0,0,[2))\varepsilon=\\
                          &=(1,1,[1))\varepsilon=\\
                          &=(p+k,p+k,[0)).
\end{align*}
Then the equalities
\begin{align*}
  (p+k,p+k,[0))&=(k,k,[0))\cdot (t,t,[2))= \\
   &=
   \left\{
     \begin{array}{ll}
       (t,t,[2)),               & \hbox{if~} k\leqslant t;\\
       (k,k,[0)\cap (t-k+[2))), & \hbox{if~} k>t
     \end{array}
   \right.
\end{align*}
imply that $k=k+p$, and hence $p=0$. A contradiction.

\smallskip

Suppose that $(0,0,[2))\varepsilon\in \boldsymbol{B}_{\omega}^{\{[1)\}}$.
The choice of the integer $p$ and the structure of the natural partial order on $E(\boldsymbol{B}_{\omega}^{\mathscr{F}^3})$ (see Lemma~5 of \cite{Gutik-Mykhalenych=2020} or Proposition~3 in \cite{Gutik-Mykhalenych=2021}) imply that there exists a positive integer $t$ such that $(0,0,[2))\varepsilon=(t,t,[1))$. Similar as in the previous case we get that $p\leqslant t\leqslant k+p$. Then the equality
\begin{equation*}
  (1,1,[0))\cdot (0,0,[2))=(1,1,[1)),
\end{equation*}
implies that
\begin{align*}
  (k,k,[0))\cdot (t,t,[1))&=(1,1,[0))\varepsilon\cdot (0,0,[2))\varepsilon=\\
                          &=(1,1,[1))\varepsilon=\\
                          &=(p+k,p+k,[0)),
\end{align*}
and hence the equalities
\begin{align*}
  (p+k,p+k,[0))&=(k,k,[0))\cdot (t,t,[1))= \\
   &=
   \left\{
     \begin{array}{ll}
       (t,t,[1)), & \hbox{if~} k\leqslant t;\\
       (k,k,[0)), & \hbox{if~} k>t
     \end{array}
   \right.
\end{align*}
imply that $k=k+p$, and hence $p=0$. A contradiction.

\smallskip

Suppose that $(0,0,[2))\varepsilon\in \boldsymbol{B}_{\omega}^{\{[0)\}}$.
The choice of the integer $p$ and the structure of the  natural partial order on $E(\boldsymbol{B}_{\omega}^{\mathscr{F}^3})$ (see Lemma~5 of \cite{Gutik-Mykhalenych=2020} or Proposition~3 in \cite{Gutik-Mykhalenych=2021}) imply that there exists a positive integer $t$ such that $(0,0,[2))\varepsilon=(t,t,[0))$. Similar as in the previous case we get that $p\leqslant t\leqslant k+p$. Then the equality
\begin{equation*}
  (1,1,[0))\cdot (0,0,[2))=(1,1,[1)),
\end{equation*}
implies that
\begin{align*}
  (k,k,[0))\cdot (t,t,[0))&=(1,1,[0))\varepsilon\cdot (0,0,[2))\varepsilon=\\
                          &=(1,1,[1))\varepsilon=\\
                          &=(p+k,p+k,[0)).
\end{align*}
Then we have that
\begin{align*}
  (p+k,p+k,[0))&=(k,k,[0))\cdot (t,t,[0))= \\
   &=
   \left\{
     \begin{array}{ll}
       (t,t,[0)), & \hbox{if~} k\leqslant t;\\
       (k,k,[0)), & \hbox{if~} k>t.
     \end{array}
   \right.
\end{align*}
If $k=k+p$ then $p=0$, which contradicts the assumption of the proposition. If $t=p+k$ then
\begin{equation*}
  (1,1,[1))\varepsilon=(p+k,p+k,[0))= (0,0,[2))\varepsilon,
\end{equation*}
which contradicts the injectivity of the map $\varepsilon$.

\smallskip

The obtained contradictions imply the statement of the proposition.
\end{proof}

The following theorem summarises the main result of this section and it follows from Lemma \ref{lemma-2.4} and Propositions \ref{proposition-2.5}--\ref{proposition-2.8}.

\begin{theorem}\label{theorem-2.9}
Let $\mathscr{F}^3=\{[0),[1),[2)\}$ and $\varepsilon$ be an injective monoid endomorphism of the semigroup $\boldsymbol{B}_{\omega}^{\mathscr{F}^3}$. If the restriction $\varepsilon{\downharpoonright}_{\boldsymbol{B}_{\omega}^{\mathscr{F}_{0,1}}}$ of the map $\varepsilon$ onto the subsemigroup $\boldsymbol{B}_{\omega}^{\mathscr{F}_{0,1}}$ of $\boldsymbol{B}_{\omega}^{\mathscr{F}^3}$ is an injective monoid endomorphism of $\boldsymbol{B}_{\omega}^{\mathscr{F}_{0,1}}$, then $\varepsilon=\alpha_{[k]}$ for some positive integer $k$.
\end{theorem}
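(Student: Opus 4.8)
The plan is to reduce to the already-settled two-element case and then eliminate the unwanted possibilities one by one, so that a single surviving case feeds into Proposition~\ref{proposition-2.6}. First I would invoke Theorem~\ref{theorem-2.2}. By hypothesis the restriction $\varepsilon{\downharpoonright}_{\boldsymbol{B}_{\omega}^{\mathscr{F}_{0,1}}}$ actually maps $\boldsymbol{B}_{\omega}^{\mathscr{F}_{0,1}}$ into itself and is an injective monoid endomorphism of $\boldsymbol{B}_{\omega}^{\mathscr{F}_{0,1}}=\boldsymbol{B}_{\omega}^{\mathscr{F}^2}$; hence Theorem~\ref{theorem-2.2} applies and tells us that this restriction equals either $\alpha_{k,p}$ for some positive integer $k$ and some $p\in\{0,\ldots,k-1\}$, or $\beta_{k,p}$ for some $k\geqslant 2$ and some $p\in\{1,\ldots,k-1\}$.

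Next I would discard all but one of these possibilities using the results established just above. Proposition~\ref{proposition-2.8} shows that no injective monoid endomorphism $\varepsilon$ of $\boldsymbol{B}_{\omega}^{\mathscr{F}^3}$ can have its restriction equal to $\beta_{k,p}$, so the entire $\beta$-family is ruled out. Likewise Proposition~\ref{proposition-2.7} shows that the restriction cannot coincide with $\alpha_{k,p}$ for any $p\in\{1,\ldots,k-1\}$, so in the $\alpha$-family only the value $p=0$ can occur. Consequently the restriction must equal $\alpha_{k,0}$ for some positive integer $k$, that is,
\begin{align*}
  (i,j,[0))\varepsilon&=(ki,kj,[0)),\\
  (i,j,[1))\varepsilon&=(ki,kj,[1)),
\end{align*}
for all $i,j\in\omega$.

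Finally I would extract the two fixed points that Proposition~\ref{proposition-2.6} requires. Setting $i=j=0$ in the displayed formulas gives $(0,0,[0))\varepsilon=(0,0,[0))$ and $(0,0,[1))\varepsilon=(0,0,[1))$, which are exactly the hypotheses of Proposition~\ref{proposition-2.6}. That proposition then yields $\varepsilon=\alpha_{[k]}$ for some positive integer $k$, which is the assertion of the theorem; that $\alpha_{[k]}$ is genuinely an injective monoid endomorphism is furnished by Lemma~\ref{lemma-2.4}. Since each step is a direct citation of an already proved statement, no real obstacle remains in this final assembly: the only point demanding care is verifying that Propositions~\ref{proposition-2.7} and~\ref{proposition-2.8} together exhaust every nonzero translation parameter $p$ in both families $\alpha_{k,p}$ and $\beta_{k,p}$, so that precisely the case $\alpha_{k,0}$ survives Theorem~\ref{theorem-2.2} and correctly triggers Proposition~\ref{proposition-2.6}.
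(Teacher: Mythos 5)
Your proposal is correct and is essentially the paper's own proof: the paper disposes of Theorem~\ref{theorem-2.9} with the single remark that it ``follows from Lemma~\ref{lemma-2.4} and Propositions~\ref{proposition-2.5}--\ref{proposition-2.8}'', and your assembly --- Theorem~\ref{theorem-2.2} giving the $\alpha_{k,p}$/$\beta_{k,p}$ dichotomy for the restriction, Propositions~\ref{proposition-2.8} and~\ref{proposition-2.7} eliminating the $\beta$-family and all $\alpha_{k,p}$ with $p\geqslant 1$, leaving $\alpha_{k,0}$ whose fixed points $(0,0,[0))$ and $(0,0,[1))$ trigger Proposition~\ref{proposition-2.6} --- is exactly the intended chain of citations. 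You have merely made explicit the case-elimination the paper leaves implicit, which is a faithful (indeed more detailed) rendering of the same argument.
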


\begin{theorem}\label{theorem-3.1}
Let $\mathscr{F}^3=\{[0),[1),[2)\}$. Every injective monoid endomorphism of the semigroup $\boldsymbol{B}_{\omega}^{\mathscr{F}^3}$ is an extension of injective endomorphisms of its submonoid $\boldsymbol{B}_{\omega}^{\mathscr{F}_{0,1}}$.
\end{theorem}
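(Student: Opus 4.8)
The plan is to reduce the statement to a single inclusion. Saying that $\varepsilon$ is an extension of an injective endomorphism of $\boldsymbol{B}_{\omega}^{\mathscr{F}_{0,1}}$ means precisely that $\varepsilon\bigl(\boldsymbol{B}_{\omega}^{\mathscr{F}_{0,1}}\bigr)\subseteq\boldsymbol{B}_{\omega}^{\mathscr{F}_{0,1}}$, for then $\varepsilon{\downharpoonright}_{\boldsymbol{B}_{\omega}^{\mathscr{F}_{0,1}}}$ is automatically an injective monoid endomorphism of $\boldsymbol{B}_{\omega}^{\mathscr{F}_{0,1}}$. So the entire task is to prove this inclusion, and I would base the argument on the observation that $\boldsymbol{B}_{\omega}^{\mathscr{F}_{0,1}}$ is generated as a monoid by the three elements $(1,0,[0))$, $(0,1,[0))$ and $(0,0,[1))$, since $(i,j,[0))=(1,0,[0))^{i}(0,1,[0))^{j}$ and $(i,j,[1))=(1,0,[0))^{i}(0,0,[1))(0,1,[0))^{j}$. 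Because $\boldsymbol{B}_{\omega}^{\mathscr{F}_{0,1}}$ is closed under the operation, it suffices to show that the images of these three generators already lie in $\boldsymbol{B}_{\omega}^{\mathscr{F}_{0,1}}$.

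First I would locate the images of the two bicyclic generators. As $(0,0,[0))$ is the identity of the monoid it is fixed by $\varepsilon$, so from $(0,1,[0))\cdot(1,0,[0))=(0,0,[0))$ we obtain $(0,1,[0))\varepsilon\cdot(1,0,[0))\varepsilon=(0,0,[0))$. A direct inspection of \eqref{eq-1.2} shows that a product of two elements of $\boldsymbol{B}_{\omega}^{\mathscr{F}^3}$ can equal $(0,0,[0))$ only when the two factors are $(0,k,[0))$ and $(k,0,[0))$ for a common $k\in\omega$: matching the first two coordinates forces the shifts to vanish, and then the third coordinate forces both families to be $[0)$. Injectivity rules out $k=0$. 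Hence $(1,0,[0))\varepsilon=(k,0,[0))$ and $(0,1,[0))\varepsilon=(0,k,[0))$ for some $k\geqslant 1$, both inside $\boldsymbol{B}_{\omega}^{\{[0)\}}\subseteq\boldsymbol{B}_{\omega}^{\mathscr{F}_{0,1}}$, and therefore $(i,j,[0))\varepsilon=(ki,kj,[0))$ for all $i,j\in\omega$.

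The crux is to show $(0,0,[1))\varepsilon\in\boldsymbol{B}_{\omega}^{\mathscr{F}_{0,1}}$. Since $(0,0,[1))$ is an idempotent below the identity, its image is an idempotent $(s,s,[r))$, and I would argue by contradiction, assuming $r=2$. The decisive ingredient is the relation
\[
(1,1,[0))\cdot(0,0,[2))=(1,1,[1))=(1,0,[0))\cdot(0,0,[1))\cdot(0,1,[0)),
\]
the very computation that already powers Propositions \ref{proposition-2.5}--\ref{proposition-2.8}. Applying $\varepsilon$ and using the previous step, the right member evaluates to $(k,0,[0))\cdot(s,s,[2))\cdot(0,k,[0))=(k+s,k+s,[2))$, so that $(k,k,[0))\cdot(0,0,[2))\varepsilon=(k+s,k+s,[2))$. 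Writing $(0,0,[2))\varepsilon=(s',s',[r'))$ (again an idempotent) and solving this equation through \eqref{eq-1.2} rules out $s'<k$ and then forces $(0,0,[2))\varepsilon=(k+s,k+s,[2))$. But the right member is exactly $(1,1,[1))\varepsilon$, whence $(0,0,[2))\varepsilon=(1,1,[1))\varepsilon$ while $(0,0,[2))\neq(1,1,[1))$, contradicting injectivity. Thus $r\in\{0,1\}$ and $(0,0,[1))\varepsilon\in\boldsymbol{B}_{\omega}^{\mathscr{F}_{0,1}}$.

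Having sent all three generators into $\boldsymbol{B}_{\omega}^{\mathscr{F}_{0,1}}$, closure of this subsemigroup under multiplication yields $\varepsilon\bigl(\boldsymbol{B}_{\omega}^{\mathscr{F}_{0,1}}\bigr)\subseteq\boldsymbol{B}_{\omega}^{\mathscr{F}_{0,1}}$, which is the assertion. The hard part is the third step: the natural partial order alone does not forbid $(0,0,[1))\varepsilon$ from being a $[2)$-element, because for $k\geqslant 2$ the order interval between $(k,k,[0))$ and $(0,0,[0))$ contains idempotents with each of the three third coordinates $[0)$, $[1)$ and $[2)$. The exclusion cannot be obtained by order-theoretic bookkeeping and must instead be extracted by playing the multiplicative link between $(0,0,[1))$ and $(0,0,[2))$ against injectivity, exactly as above.
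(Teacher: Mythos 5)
Your proposal is correct, and it takes a genuinely more elementary and self-contained route than the paper's. The paper argues by contradiction on the shape of the restriction $\varepsilon{\downharpoonright}_{\boldsymbol{B}_{\omega}^{\mathscr{F}_{0,1}}}$ and imports outside machinery: Proposition~4 of \cite{Gutik-Mykhalenych=2021} (each layer $\boldsymbol{B}_{\omega}^{\{[n)\}}$ maps into a single layer), Theorem~1 of \cite{Gutik-Prokhorenkova-Sekh=2021} (to obtain $(i,j,[0))\varepsilon=(ki,kj,[0))$), and Theorem~\ref{theorem-2.2}; it then excludes the three possible layers of $(0,0,[2))\varepsilon$ one at a time, using natural-partial-order estimates $p\leqslant t\leqslant k+p$ sharpened by injectivity before reaching the final clash. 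You instead reduce everything to the three monoid generators $(1,0,[0))$, $(0,1,[0))$, $(0,0,[1))$, recover $(1,0,[0))\varepsilon=(k,0,[0))$ and $(0,1,[0))\varepsilon=(0,k,[0))$ directly from the fact (which I checked against \eqref{eq-1.2}: the first coordinate $i_1-j_1+i_2=0$ with $j_1\leqslant i_2$ forces $i_1=0$ and $j_1=i_2$, and the third coordinate forces $F_1=F_2=[0)$) that the identity factors only as $(0,k,[0))\cdot(k,0,[0))$, thereby bypassing both cited results; and you dispose of the single problematic case $(0,0,[1))\varepsilon=(s,s,[2))$ by one computation: writing $(0,0,[2))\varepsilon=(s',s',[r'))$, the equation $(k,k,[0))\cdot(s',s',[r'))=(k+s,k+s,[2))$ rules out $s'<k$ (that case gives $(k,k,[\max\{0,r'+s'-k\}))$, so $s'=k+2-r'\geqslant k$, a contradiction) and otherwise forces $s'=k+s$, $r'=2$, whence $(0,0,[2))\varepsilon=(1,1,[1))\varepsilon$ against injectivity. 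The decisive mechanism is the same as the paper's --- the relation $(1,1,[0))\cdot(0,0,[2))=(1,1,[1))$ played against injectivity, which is exactly how the paper's final case ends --- but your uniform treatment of $(0,0,[2))\varepsilon$ collapses the paper's three-case layer analysis and the order bookkeeping into a single equation, and your generator argument makes the proof independent of the endomorphism theory of the bicyclic monoid. What the paper's version buys in exchange is reuse of machinery needed elsewhere in the same program and, along the way, the explicit form $(i,j,[1))\varepsilon=(ki+p,kj+p,[2))$ in the hypothetical bad case; your version buys brevity and self-containment, and it even delivers $(i,j,[0))\varepsilon=(ki,kj,[0))$ as a byproduct rather than a citation.
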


\begin{proof}
Suppose to the contrary that there exists an injective monoid endomorphism $\varepsilon$ of the semigroup $\boldsymbol{B}_{\omega}^{\mathscr{F}^3}$ such that the restriction $\varepsilon{\downharpoonright}_{\boldsymbol{B}_{\omega}^{\mathscr{F}_{0,1}}}$ of the map $\varepsilon$ onto the subsemigroup $\boldsymbol{B}_{\omega}^{\mathscr{F}_{0,1}}$ of $\boldsymbol{B}_{\omega}^{\mathscr{F}^3}$ is not a monoid endomorphism of $\boldsymbol{B}_{\omega}^{\mathscr{F}_{0,1}}$. By Proposition~3 of \cite{Gutik-Mykhalenych=2020}, for any $n=0,1,2$ the semigroup  $\boldsymbol{B}_{\omega}^{\{[n)\}}$ is isomorphic to the bicyclic semigroup.  By Proposition~4 of \cite{Gutik-Mykhalenych=2021} we have that $(i,j,[0))\varepsilon\in \boldsymbol{B}_{\omega}^{\{[0)\}}$ for all $i,j\in\omega$, because $\varepsilon$ is an injective monoid endomorphism of the semigroup $\boldsymbol{B}_{\omega}^{\mathscr{F}^3}$. Moreover,  by Theorem~1 from \cite{Gutik-Prokhorenkova-Sekh=2021} there exists a positive integer $k$ such that $(i,j,[0))\varepsilon=(ki,kj,[0))$ for all $i,j\in\omega$. Again, Proposition~4 of \cite{Gutik-Mykhalenych=2021} implies that for any $n\in\{1,2\}$ there exists $m_m\in\{0,1,2\}$ such that $(i,j,[n))\varepsilon\in \boldsymbol{B}_{\omega}^{\{[m_n)\}}$ for all $i,j\in\omega$. The above arguments and Theorem~\ref{theorem-2.2} imply that $(i,j,[1))\varepsilon\in \boldsymbol{B}_{\omega}^{\{[2)\}}$ for all $i,j\in\omega$.

\smallskip

We remark that the assumption that
\begin{equation*}
  (i,j,[2))\varepsilon\in \boldsymbol{B}_{\omega}^{\mathscr{F}_{0,1}}, \qquad \hbox{for all} \quad i,j\in\omega,
\end{equation*}
contradicts the equality
\begin{equation*}
  (1,1,[0))\cdot (0,0,[2))=(1,1,[1)).
\end{equation*}
By Proposition~1.4.21(2) of \cite{Lawson=1998}, $(0,0,[2))\varepsilon$ is an idempotent of $\boldsymbol{B}_{\omega}^{\mathscr{F}^3}$. If $(0,0,[2))\varepsilon=(t,t,[0))$ for some $t\in\omega$ (see Lemma~2 in \cite{Gutik-Mykhalenych=2020}), then we have that
\begin{align*}
  (1,1,[1))\varepsilon &=((1,1,[0))\cdot (0,0,[2)))\varepsilon= \\
   &=(1,1,[0))\varepsilon\cdot(0,0,[2))\varepsilon= \\
   &=(k,k,[0))\cdot (t,t,[0))=\\
   &=(\max\{k,t\},\max\{k,t\},[0))\in \boldsymbol{B}_{\omega}^{\{[0)\}}.
\end{align*}
This contradicts the condition that $(i,j,[1))\varepsilon\in \boldsymbol{B}_{\omega}^{\{[2)\}}$ for all $i,j\in\omega$. If  $(0,0,[2))\varepsilon=(t,t,[1))$ for some $t\in\omega$ (see Lemma~2 in \cite{Gutik-Mykhalenych=2020}), then we obtain that
\begin{align*}
  (1,1,[1))\varepsilon &=((1,1,[0))\cdot (0,0,[2)))\varepsilon= \\
   &=(1,1,[0))\varepsilon\cdot(0,0,[2))\varepsilon= \\
   &=(k,k,[0))\cdot (t,t,[1))=\\
   &=
   \left\{
     \begin{array}{ll}
       (t,t,[1)), & \hbox{if~} t\geqslant k; \\
       (k,k,[0)), & \hbox{if~} t<k.
     \end{array}
   \right.
\end{align*}
This contradicts the condition that $(i,j,[1))\varepsilon\in \boldsymbol{B}_{\omega}^{\{[2)\}}$ for all $i,j\in\omega$.

\smallskip

Suppose that $(0,0,[2))\varepsilon\in \boldsymbol{B}_{\omega}^{\{[2)\}}$. By Lemma~2 from \cite{Gutik-Mykhalenych=2020} there exists $t\in\omega$ such that $(0,0,[2))\varepsilon=(t,t,[2))$. Since $(0,0,[2))\preccurlyeq(0,0,[1))$, Proposition~1.4.21(6) of \cite{Lawson=1998} implies that $(0,0,[2))\varepsilon\preccurlyeq(0,0,[1))\varepsilon$. If $(0,0,[2))\varepsilon=(0,0,[2))$, then by the equality $(0,0,[0))\varepsilon=(0,0,[0))$ and
\begin{align*}
(0,0,[2))&=(0,0,[2))\varepsilon\preccurlyeq\\
         &\preccurlyeq(0,0,[1))\varepsilon\preccurlyeq\\
         &\preccurlyeq(0,0,[0))\varepsilon=\\
         &=(0,0,[0))
\end{align*}
we obtain that $(0,0,[1))\varepsilon=(0,0,[1))$. Theorem~\ref{theorem-2.1} implies that $\varepsilon$ is the identity map of $\boldsymbol{B}_{\omega}^{\mathscr{F}^3}$, which contradicts the assumption. Hence we have that $t\neq 0$.

\smallskip

Suppose that $(0,0,[1))\varepsilon=(p,p,[2))$ for some $p\in\omega$.
Since
\begin{align*}
  (1,0,[0))\cdot(0,0,[1))\cdot(0,1,[0))&=((1,0,[1))\cdot(0,1,[0))=\\
                                       &=(1,1,[1)),
\end{align*}
we have that
\begin{align*}
  (1,1,[1))\varepsilon& =((1,0,[0))\cdot(0,0,[1))\cdot(0,1,[0)))\varepsilon= \\
   &=(1,0,[0))\varepsilon\cdot(0,0,[1))\varepsilon\cdot(0,1,[0))\varepsilon=\\
   &=(k,0,[0))\cdot(p,p,[2))\cdot(0,k,[0))= \\
   &=(k+p,p,[2))\cdot(0,k,[0))= \\
   &=(k+p,k+p,[2)).
\end{align*}
Put $(0,1,[1))\varepsilon=(x,y,[2))$. By Proposition 1.4.21 from \cite{Lawson=1998} and Lemma~4 of \cite{Gutik-Mykhalenych=2020} we get that
\begin{align*}
  (1,0,[1))\varepsilon&=((0,1,[1))^{-1})\varepsilon=\\
   &=((0,1,[1))\varepsilon)^{-1}=\\
   &=(x,y,[2))^{-1}=\\
   &=(y,x,[2)).
\end{align*}
This implies that
\begin{align*}
  (p,p,[2))&=(0,0,[1))\varepsilon=\\
   &=((0,1,[1))\cdot(1,0,[1)))\varepsilon=\\
   &=(0,1,[1))\varepsilon\cdot(1,0,[1))\varepsilon=\\
   &=(x,y,[2))\cdot(y,x,[2))=  \\
   &=(x,x,[2))
\end{align*}
and
\begin{align*}
  (k+p,k+p,[2))&=(1,1,[1))\varepsilon=\\
   &=((1,0,[1))\cdot(0,1,[1)))\varepsilon=\\
   &=(1,0,[1))\varepsilon\cdot(0,1,[1))\varepsilon= \\
   &=(y,x,[2))\cdot(x,y,[2))=\\
   &=(y,y,[2)).
\end{align*}
Hence by the definition of the semigroup $\boldsymbol{B}_{\omega}^{\mathscr{F}}$ we get that
\begin{equation*}
  (0,1,[1))\varepsilon=(p,k+p,[2)) \qquad \hbox{and} \qquad (1,0,[1))\varepsilon=(k+p,p,[2)).
\end{equation*}
Then for any $i,j\in\omega$ we have that
\begin{align*}
  (i,j,[1))\varepsilon&=((i,0,[1))\cdot(0,j,[1)))\varepsilon=\\
   &=((1,0,[1))^{i}\cdot(0,1,[1))^{j})\varepsilon=\\
   &=((1,0,[1))\varepsilon)^{i}\cdot((0,1,[1))\varepsilon)^{j}= \\
   &=(k+p,p,[2))^{i}\cdot(p,k+p,[2))^{j}=\\
   &=(ki+p,p,[2))\cdot(p,kj+p,[2))=\\
   &=(ki+p,kj+p,[2)).
\end{align*}


 Since $(1,1,[0))\preccurlyeq(0,0,[1))$ in $E\big(\boldsymbol{B}_{\omega}^{\mathscr{F}^3}\big)$, by Proposition 1.4.21(6) from \cite{Lawson=1998} we have that
\begin{equation*}
  (k,k,[0))=(1,1,[0))\varepsilon\preccurlyeq(0,0,[1))\varepsilon=(p,p,[2)).
\end{equation*}
Then Lemma~5 of \cite{Gutik-Mykhalenych=2020} implies that $k\geqslant 2$. Also, the inequalities
\begin{equation*}
  (1,1,[1))\preccurlyeq(0,0,[2))\preccurlyeq(0,0,[1))
\end{equation*}
in $E\big(\boldsymbol{B}_{\omega}^{\mathscr{F}^3}\big)$ and Proposition 1.4.21(6) of \cite{Lawson=1998} imply that
\begin{align*}
  (k+p,k+p,[2))&=(1,1,[1))\varepsilon\preccurlyeq\\
               &\preccurlyeq(0,0,[2))\varepsilon=\\
               &=(t,t,[2))\preccurlyeq\\
               &\preccurlyeq(0,0,[1))\varepsilon=\\
               &=(p,p,[2)).
\end{align*}
By Lemma~5 of \cite{Gutik-Mykhalenych=2020} we get that $p\leqslant t\leqslant k+p$. Since $\varepsilon$ is an injective monoid endomorphism of the semigroup $\boldsymbol{B}_{\omega}^{\mathscr{F}^3}$ we conclude that $p< t< k+p$.

\smallskip

The equality
\begin{equation*}
  (1,1,[0))\cdot (0,0,[2))=(1,1,[1)).
\end{equation*}
implies that
\begin{align*}
  (k+p,k+p,[2))&=(1,1,[1))\varepsilon =\\
   &=((1,1,[0))\cdot (0,0,[2)))\varepsilon= \\
   &=(1,1,[0))\varepsilon\cdot(0,0,[2))\varepsilon= \\
   &=(k,k,[0))\cdot (t,t,[2))=\\
   &=
   \left\{
     \begin{array}{ll}
       (t,t,[2)), & \hbox{if~} k\leqslant t; \\
       (k,k,[1)), & \hbox{if~} k=t+1;\\
       (k,k,[0)), & \hbox{if~} k\geqslant t+2.
     \end{array}
   \right.
\end{align*}
Hence $k\leqslant t$ and $k+p=t$. The last equality implies that
\begin{equation*}
  (1,1,[1))\varepsilon=(k+p,k+p,[2))=(0,0,[2))\varepsilon,
\end{equation*}
which contradicts the injectivity of the map $\varepsilon$.

\smallskip

The obtained contradictions imply the statement of the theorem.
\end{proof}

\section{\textbf{On the monoid of all injective endomorphisms of the semigroup $\boldsymbol{B}_{\omega}^{\mathscr{F}^3}$}}\label{section-4}

Theorems~\ref{theorem-2.9} and \ref{theorem-3.1} imply the following theorem.

\begin{theorem}\label{theorem-4.1}
Let $\mathscr{F}^3=\{[0),[1),[2)\}$ and $\varepsilon$ be an injective monoid endomorphism of the semigroup $\boldsymbol{B}_{\omega}^{\mathscr{F}^3}$. Then $\varepsilon=\alpha_{[k]}$ for some positive integer $k$.
\end{theorem}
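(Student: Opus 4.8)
The plan is to obtain Theorem~\ref{theorem-4.1} as an immediate consequence of the two preceding results, Theorem~\ref{theorem-3.1} and Theorem~\ref{theorem-2.9}, which between them already carry the entire analytic burden. Fix an arbitrary injective monoid endomorphism $\varepsilon$ of $\boldsymbol{B}_{\omega}^{\mathscr{F}^3}$. The only nontrivial point is that the restriction $\varepsilon{\downharpoonright}_{\boldsymbol{B}_{\omega}^{\mathscr{F}_{0,1}}}$ a priori need not map the subsemigroup $\boldsymbol{B}_{\omega}^{\mathscr{F}_{0,1}}$ into itself, so it is not obvious that it is an endomorphism of $\boldsymbol{B}_{\omega}^{\mathscr{F}_{0,1}}$ at all; resolving exactly this is the content of Theorem~\ref{theorem-3.1}.

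First I would invoke Theorem~\ref{theorem-3.1}: it asserts that every injective monoid endomorphism of $\boldsymbol{B}_{\omega}^{\mathscr{F}^3}$ is an extension of an injective endomorphism of its submonoid $\boldsymbol{B}_{\omega}^{\mathscr{F}_{0,1}}$, i.e.\ that $\varepsilon$ carries $\boldsymbol{B}_{\omega}^{\mathscr{F}_{0,1}}$ into $\boldsymbol{B}_{\omega}^{\mathscr{F}_{0,1}}$ and hence that $\varepsilon{\downharpoonright}_{\boldsymbol{B}_{\omega}^{\mathscr{F}_{0,1}}}$ is itself an injective monoid endomorphism of $\boldsymbol{B}_{\omega}^{\mathscr{F}_{0,1}}$. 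This is precisely the hypothesis required by Theorem~\ref{theorem-2.9}. Applying Theorem~\ref{theorem-2.9} to $\varepsilon$ then yields a positive integer $k$ with $\varepsilon=\alpha_{[k]}$, which is the desired conclusion.

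I do not expect any genuine obstacle at this stage, since the substantive case analysis has been discharged earlier: the verification that $\alpha_{[k]}$ is an endomorphism sits in Lemma~\ref{lemma-2.4}, the recognition step lives in Proposition~\ref{proposition-2.5} and Proposition~\ref{proposition-2.6}, the exclusion of the $\alpha_{k,p}$ and $\beta_{k,p}$ restrictions with $p\geqslant 1$ is handled in Proposition~\ref{proposition-2.7} and Proposition~\ref{proposition-2.8}, and the stability of $\boldsymbol{B}_{\omega}^{\mathscr{F}_{0,1}}$ under $\varepsilon$ is Theorem~\ref{theorem-3.1}. The proof of Theorem~\ref{theorem-4.1} is therefore a two-line citation chain, and the care required is only to confirm that the quantifiers match: Theorem~\ref{theorem-3.1} supplies exactly the premise that Theorem~\ref{theorem-2.9} consumes, so the two glue together without any additional computation.
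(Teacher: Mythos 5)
Your proposal is correct and coincides with the paper's own derivation: the paper gives no separate proof of Theorem~\ref{theorem-4.1}, stating only that it follows from Theorems~\ref{theorem-2.9} and \ref{theorem-3.1}, exactly the citation chain you assemble. Your reading of Theorem~\ref{theorem-3.1} as supplying that $\varepsilon{\downharpoonright}_{\boldsymbol{B}_{\omega}^{\mathscr{F}_{0,1}}}$ is an injective monoid endomorphism of $\boldsymbol{B}_{\omega}^{\mathscr{F}_{0,1}}$ --- the precise hypothesis consumed by Theorem~\ref{theorem-2.9} --- is the intended gluing, so nothing is missing.
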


By $(\mathbb{N},\cdot)$ we denote the multiplicative semigroup of positive integers.

\begin{theorem}\label{theorem-4.2}
Let $\mathscr{F}^3=\{[0),[1),[2)\}$. Then the monoid $\boldsymbol{End}_*^1(\boldsymbol{B}_{\omega}^{\mathscr{F}^3})$ of all injective endomorphisms of the semigroup $\boldsymbol{B}_{\omega}^{\mathscr{F}^3}$ is isomorphic to $(\mathbb{N},\cdot)$.
\end{theorem}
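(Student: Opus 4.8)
The plan is to exhibit an explicit isomorphism between the monoid $\boldsymbol{End}_*^1(\boldsymbol{B}_{\omega}^{\mathscr{F}^3})$ and $(\mathbb{N},\cdot)$. By Theorem~\ref{theorem-4.1}, every injective monoid endomorphism of $\boldsymbol{B}_{\omega}^{\mathscr{F}^3}$ equals $\alpha_{[k]}$ for a unique positive integer $k$, so the underlying sets are in bijection via $\alpha_{[k]}\mapsto k$. First I would verify that this correspondence is well-defined and bijective: distinct integers $k\neq k'$ give distinct maps, since already on $(1,1,[0))$ we have $(1,1,[0))\alpha_{[k]}=(k,k,[0))\neq(k',k',[0))=(1,1,[0))\alpha_{[k']}$, and by Theorem~\ref{theorem-4.1} every element of the monoid arises this way. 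Thus the map $\Phi\colon\boldsymbol{End}_*^1(\boldsymbol{B}_{\omega}^{\mathscr{F}^3})\to(\mathbb{N},\cdot)$, $\alpha_{[k]}\mapsto k$, is a bijection.

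The heart of the proof is the homomorphism property: I must show that composition of endomorphisms corresponds to multiplication of integers, i.e.\ $\alpha_{[k]}\circ\alpha_{[m]}=\alpha_{[km]}$ for all positive integers $k,m$. Here composition means first applying $\alpha_{[k]}$ and then $\alpha_{[m]}$. I would verify this by a direct calculation on each of the three branches defining $\alpha_{[k]}$ (according to whether $p\in\{0,1\}$ or $p=2$). For $p\in\{0,1\}$ the computation is immediate:
\begin{equation*}
  (i,j,[p))\alpha_{[k]}\alpha_{[m]}=(ki,kj,[p))\alpha_{[m]}=(mki,mkj,[p))=(i,j,[p))\alpha_{[km]}.
\end{equation*}
The case $p=2$ requires tracking the affine shift carefully:
\begin{equation*}
  (i,j,[2))\alpha_{[k]}\alpha_{[m]}=(k(i+1)-1,k(j+1)-1,[2))\alpha_{[m]},
\end{equation*}
and since the first coordinate $k(i+1)-1=ki+(k-1)$ corresponds to index $ki$ under the convention $(a,b,[2))=((a+1)/1\text{-shifted})$, applying $\alpha_{[m]}$ sends it to $m(ki+1)-1=mki+m-1$; one then checks this equals $km(i+1)-1=(i,j,[2))\alpha_{[km]}$ precisely because $m(ki-1+1)-1=mki+m-1=km\,i+(km-1)$ after substituting the index $ki$, confirming the identity. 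I would present this branch as a short displayed computation.

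The main obstacle, and the only genuinely delicate point, is the $p=2$ branch: because $\alpha_{[k]}$ acts on the third-coordinate value $[2)$ by an affine map $x\mapsto k(x+1)-1$ rather than the linear map $x\mapsto kx$ used for $[0)$ and $[1)$, one must confirm that these affine maps compose correctly, namely that $x\mapsto m((k(x+1)-1)+1)-1=mk(x+1)-1$, which is exactly the affine map attached to $\alpha_{[km]}$. This is a one-line check once set up, but it is the step where a sign or off-by-one error would invalidate the homomorphism claim, so I would state it explicitly. Once $\Phi$ is shown to be a bijective homomorphism, it is an isomorphism, and since $\alpha_{[1]}$ is the identity endomorphism mapping to $1\in\mathbb{N}$, the isomorphism is a monoid isomorphism, completing the proof.
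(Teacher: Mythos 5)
Your proposal is correct and is essentially the paper's own proof: classify all injective monoid endomorphisms via Theorem~\ref{theorem-4.1}, verify $\alpha_{[k]}\alpha_{[m]}=\alpha_{[km]}$ branchwise on $[0)$, $[1)$, $[2)$, and conclude that $k\mapsto\alpha_{[k]}$ is a bijective monoid homomorphism --- the paper performs exactly the computation you isolate as the delicate point, namely $k_2\bigl(k_1(i+1)-1+1\bigr)-1=k_2k_1(i+1)-1$. One caveat: your intermediate narration of the $p=2$ branch is garbled, since the coordinate $k(i+1)-1$ does not ``correspond to index $ki$'' and the asserted chain $m(ki+1)-1=mki+m-1=kmi+(km-1)$ is false for $k\geqslant 2$; this is harmless only because your final paragraph replaces it with the correct conjugated-affine check $x\mapsto m\bigl((k(x+1)-1)+1\bigr)-1=mk(x+1)-1$, which is all that is needed and should be the computation actually displayed.
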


\begin{proof}
Fix arbitrary injective endomorphisms $\varepsilon_1$ and $\varepsilon_2$ of the semigroup $\boldsymbol{B}_{\omega}^{\mathscr{F}}$. By Theorem~\ref{theorem-4.1} there exist positive integers $k_1$ and $k_2$ such that $\varepsilon_1=\alpha_{[k_1]}$ and $\varepsilon_2=\alpha_{[k_2]}$. Then we have that
\begin{align*}
  ((i,j,[0))\alpha_{[k_1]})\alpha_{[k_2]}&=(k_1i,k_1j,[0))\alpha_{[k_2]}=\\
                                         &=(k_2k_1i,k_2k_1j,[0))=\\
                                         &=(i,j,[0))\alpha_{[k_1\cdot k_2]}; \\ \\
  ((i,j,[1))\alpha_{[k_1]})\alpha_{[k_2]}&=(k_1i,k_1j,[1))\alpha_{[k_2]}=\\
                                         &=(k_2k_1i,k_2k_1j,[1))=\\
                                         &=(i,j,[1))\alpha_{[k_1\cdot k_2]};
\end{align*}
and
\begin{align*}
  ((i,j,[2))\alpha_{[k_1]})\alpha_{[k_2]}&=(k_1(i+1)-1,k_1(j+1)-1,[2))\alpha_{[k_2]}=\\
    &=(k_2(k_1(i+1)-1+1)-1,k_2(k_1(j+1)-1+1)-1,[2))=\\
    &=(k_2k_1(i+1)-1,k_2k_1(j+1)-1,[2))=\\
    &=(i,j,[2))\alpha_{[k_1\cdot k_2]},
\end{align*}
for any $i,j\in\omega$. Hence we obtain that $\alpha_{[k_1]}\alpha_{[k_2]}=\alpha_{[k_1\cdot k_2]}$. It is obvious that the mapping $\mathfrak{i}\colon (\mathbb{N},\cdot)\to \boldsymbol{End}_*^1(\boldsymbol{B}_{\omega}^{\mathscr{F}})$, $k\mapsto \alpha_{[k]}$, is an injective homomorphism and by Theorem~\ref{theorem-4.1} it is surjective.
\end{proof}

\section*{\textbf{Acknowledgements}}

The authors acknowledge the referee for his/her comments and suggestions.



\end{document}